\documentclass[11pt]{article}
\usepackage[utf8]{inputenc}
\usepackage[english]{babel}

\usepackage{amsfonts}
\usepackage{amsmath,amsthm,amscd,amssymb,mathrsfs,setspace,comment}
\usepackage{mathtools}
\usepackage{cite}
\usepackage{latexsym,epsf,epsfig}
\usepackage{color}
\usepackage[a4paper,top=1in,bottom=1in,left=1in,right=1in,marginparwidth=0.5in]{geometry}
\usepackage[colorlinks,citecolor=lccx]{hyperref}

\usepackage[nameinlink,capitalise]{cleveref}
\usepackage[mathscr]{euscript}
\usepackage{dutchcal}
\usepackage{algorithm}
\usepackage{algorithmicx}
\usepackage{algpseudocode}
\usepackage[table]{xcolor}
\usepackage{soul}
\usepackage{subcaption}
\usepackage{siunitx}
\usepackage{tikz}
\usetikzlibrary{decorations.pathreplacing,calligraphy}

\usepackage{pgfplots}

\usepackage{booktabs}
\usepackage{multirow}
\usepackage{adjustbox}
\usepackage{authblk}
\usepackage{enumitem}

\newcommand{\argmin}{\arg\min}
\renewcommand{\st}{\operatorname{s.t.}}
\newcommand{\conv}{\operatorname{conv}}
\newcommand{\mcc}{\operatorname{mcc}}
\newcommand{\BV}{\operatorname{BV}}
\newcommand{\TV}{\operatorname{TV}}

\newcommand{\N}{\mathbb{N}}
\newcommand{\R}{\mathbb{R}}

\newcommand{\calQ}{\mathcal{Q}}
\newcommand{\Wad}{W^{\text{ad}}}
\newcommand{\Wadh}{W^{\text{ad}}_h}
\newcommand{\Zad}{Z^{\text{ad}}}
\newcommand{\Zadtau}{Z^{\text{ad}}_\tau}
\newcommand{\weakto}{\rightharpoonup}

\newcommand{\ydel}{y^\delta}

\definecolor{lccx}{HTML}{92268F}
\definecolor{ceruleanblue}{rgb}{0.16, 0.32, 0.75}

\ifpdf
\hypersetup{
	linkcolor=lccx
}
\fi

\title{Locally-averaged McCormick Relaxations for Discretization-regularized Inverse Problems}
\date{\today}

\definecolor{darkgreen}{rgb}{0,0.5,0}

\newcommand{\revision}[1]{{ #1}}
\newcommand{\revisionown}[1]{{ #1}}
\newcommand{\Margin}[1]{}

\newtheorem{theorem}{Theorem}[section]
\newtheorem{proposition}[theorem]{Proposition}

\theoremstyle{definition}

\newtheorem{assumption}[theorem]{Assumption}

\theoremstyle{remark}
\newtheorem{remark}[theorem]{Remark}

\crefname{assumption}{Assumption}{Assumptions}
\Crefname{assumption}{Assumption}{Assumptions}

\author[1]{Barbara Kaltenbacher}
\author[2]{Paul Manns}
\affil[1]{Department of Mathematics, Alpen-Adria-Universit\"{a}t, Klagenfurt, Austria, \textit{barbara.kaltenbacher@aau.at}}
\affil[2]{Department of Mathematics, TU Dortmund University, Dortmund, Germany, \textit{paul.manns@tu-dortmund.de}}

\begin{document}
\maketitle

\begin{abstract}
In this paper, by means of a standard model problem, we devise an approach to computing
approximate dual bounds for use in global optimization of coefficient identification
in partial differential equations (PDEs) by, e.g., (spatial) branch-and-bound methods.
Linearization is achieved by a McCormick relaxation (that is, replacing the bilinear PDE term 
by a linear one and adding inequality constraints), combined with local averaging to reduce the
number of inequalities. Optimization-based bound tightening allows us to tighten the relaxation and thus reduce the induced error. Combining this with a quantification of the discretization error and the propagated noise,
we prove that the resulting discretization regularizes the inverse problem, thus leading to an overall convergent scheme.
Numerical experiments illustrate the theoretical findings.
\end{abstract}
\section{Introduction}
The identification of coefficients in partial differential equations (PDEs) from indirect observations is a task that arises in a multitude of applications ranging from quantitative medical imaging via geophysical exploration to material characterization and its formulation as an operator equation is inherently nonlinear and ill-posed. Its regularized solution via variational methods, cf., e.g., \cite{EnglHankeNeubauer:1996,Groe84,Kirsch2021,SGGHL08,TikArs77}, leads to a nonconvex minimization problem and convergence of the regularized reconstruction to the true coefficient
requires computation of a global minimizer --- a task that has hardly been addressed in the literature so far, cf.  
\cite{Harrach_23,HarrachOW_2024,Wang04032015} and the references therein.
The aim of this paper is to contribute to the latter by devising a computationally tractable approach to determining dual bounds, that is,
lower bounds in the case of minimization. Dual bounds are required for global optimization algorithms to certify global optimality
and their computation can yield candidates for initializing gradient-based local optimization algorithms close to (approximate) global minimizers.
To this end, we employ a McCormick relaxation, combined with local averaging on a partition of the PDE domain and the induced discretization suggests itself for being used for regularization purposes as well. By means of a standard model problem---recovery of a potential in an elliptic PDE
with a bilinear term---we provide a framework that allows us to quantify both the approximation error and the propagated observational noise,
thus leading to an analysis of the resulting scheme in the sense of regularization theory.

\medskip

Let $\Omega \subset \R^d$, $d \in \N$, be a bounded, Lipschitz domain. We are interested in solving a prototypical inverse problem on $\Omega$, where the goal is to recover a function $w^\dagger$ that is a priori known to abide to bound constraints. The function $w^\dagger$ cannot be observed directly but only through indirect measurements $\revision{\ydel\approx y}=Tu$ of a state $u$,
which solves an elliptic PDE of the form $Au + f_0 uw = f_1$ that is governed by a bounded, linear, and continuously invertible operator
$A : U \to L^2(\Omega)$ (that is, an isomorphism), for some state space $U \hookrightarrow L^2(\Omega)$ and features the nonnegative multiplier
$f_0\in L^\infty(\Omega)$ and source term $f_1 \in L^2(\Omega)$ \footnote{These requirements can be weakened for some of the results in this paper
and need to be slightly strengthened for some others.} for the input $w = w^\dagger$. Formulating the \revision{noiseless} identification problem as an optimization problem yields
\Margin{R2 1.}
\begin{gather}\label{eq:inv}
\begin{aligned}
\min_{u,w}\ & \|Tu - \revision{y}\|_{Y}  \\
\st\    & Au + f_0 uw = f_1, \\
        & w \in \Wad 
        \subseteq
        \{
        w \in L^\infty(\Omega) \, : \, w_\ell(x) \le w(x) 
        \le w_u(x) \text{ for a.e. } x \in \Omega\},
\end{aligned}\tag{I}
\end{gather}
where $w_\ell,\,w_u\in L^\infty(\Omega)$ are given functions. This comprises the possible choice $\Wad\coloneqq \{w \in L^\infty(\Omega) : w(x) \in W \text{ for a.e. } x \in \Omega\}$ for a finite set $W\subseteq\mathbb{R}$ of a priori known parameter values. 
In our domain of interest, the optimal objective value is zero for the feasible and globally optimal pair $(u^\dagger, w^\dagger)$.
While this is a common property of least squares formulations of inverse problems, it will not be needed for our analysis in the remainder,
thus allowing for a perspective on formulations with alternative objective functions.

In \eqref{eq:inv}, $T : L^2(\Omega)\to Y$ is some observation operator and $y^\dagger \in Y$ the exact observational data. The most simple case of
$T$ is the identity with $Y=L^2(\Omega)$; realistically for imaging applications, $T$ is the trace on a boundary part or on a small open subset
$\omega$ of $\Omega$, thus $Y=L^2(\omega)$.

We briefly note at this point that when applying gradient-based solvers to the problems we derive from \eqref{eq:inv}, we do not change
the solution set of \eqref{eq:inv} by squaring the objective, which then gives a differentiable objective.

The available measurements of $u^\dagger$ are not exact but distorted by noise with noise level $\delta > 0$ 
\begin{equation}\label{eq:delta}
\|y^\dagger -\ydel\|_Y\leq\delta,
\end{equation}
which necessitates the introduction of a regularization technique. A well-established technique
is \emph{regularization by discretization} \cite{EnglHankeNeubauer:1996,GroetschNeubauer:1988,Kirsch2021,VainikkoHaemarik:1985},
that is, the variable $w$ is restricted to a finite-dimensional ansatz space $V_h$ with underlying mesh size $h > 0$.
Specifically, $V_h$ is defined on a finite partition $\calQ_h = \{Q_h^1,\ldots,Q_h^{N_h}\}$ of $\Omega$.
Then the optimization problem that actually must be solved reads
\begin{gather}\label{eq:inv_delta}
\begin{aligned}
\min_{u,w}\ & \|Tu - \ydel\|_{Y}  \\
\st\    & Au + f_0 uw = f_1, \\
        & w(x) \in \Wadh \coloneqq 
        \Wad\cap V_h.
\end{aligned}\tag{I$_\delta$}
\end{gather}
Another regularizing mechanism comes into play by means of bound constraints in the spirit of the so-called method of quasi solutions \cite{ClasonKlassen:2018,DombrovskajaIvanov65,Ivanov62,Ivanov63,IvanovVasinTanana02,KaltenbacherKlassen:2018,LorenzWorliczek13,NeubauerRamlau14,SeidmanVogel89}. While this is exploited in several
spots, our focus lies on the regularization by discretization
methodology.
\revision{Note that \eqref{eq:inv_delta} only contains a discretization of the coefficient $w$. For an actual implementation, also evaluation of the norm in the objective function and of the state $u$ needs to be incorporated; we refer to  \cref{sec:compexp} on the latter.}
\Margin{R2 2.}

Our algorithmic challenge for solving \eqref{eq:inv} is twofold: 
\begin{itemize}
	\item \emph{the discretization level $h$ must be adapted to the noise level $\delta$} to allow us to recover $u^\dagger$, $w^\dagger$
	as $\delta$, $h(\delta) \to 0$ \cite{EnglHankeNeubauer:1996,Kirsch2021} and
	\item \emph{the problem \eqref{eq:inv_delta} must be solved to global optimality} for given pairs $(\delta, h(\delta))$.
\end{itemize}
Regarding the latter, solving \eqref{eq:inv_delta} to global optimality is not possible by gradient-based nonlinear programming (NLP) algorithms
due to the nonlinear dependence $w \mapsto u$ through the state equation, which renders \eqref{eq:inv} and \eqref{eq:inv_delta} nonconvex.
We therefore require a global optimization algorithm like (spatial) branch-and-bound 
\cite{tawarmalani2013convexification,Land1960,HabeckPfetschUlbrich2019}, which requires means to compute (approximate) primal (upper) and
dual (lower) bounds on \eqref{eq:inv_delta}. While primal bounds can be computed with the aforementioned NLP algorithms,
dual bounds require a convex relaxation. To this end, we can replace the product $uw$ by a variable $z$ and the four corresponding
\emph{linear} McCormick inequalities \cite{mccormick1976computability}
\[ 
\mcc(u, w, z) \coloneqq 
\left(
\begin{aligned}
u_\ell w + u w_\ell - u_\ell w_\ell - z\\ 
u_u w + u w_u - u_u w_u - z\\
z - u_u w - u w_\ell + u_u w_\ell\\
z - u_\ell w - u w_u + u_\ell w_u\\
\end{aligned}
\right)
\le 0 \text{ a.e.\ in } \Omega.
\]
Here, it is a priori known that $u_\ell \le u \le u_u$ and $w_\ell \le w \le w_u$ hold for some  $u_\ell$, $u_u$, $w_\ell$, $w_u \in L^\infty(\Omega)$ and all feasible pairs  $(u,w)$ for \eqref{eq:inv_delta} or \eqref{eq:inv},
that is, if $u$ solves the state equation
\begin{gather}\label{eq:pde}
Au + f_0uw = f_1.
\end{gather}
\revision{To this end, we make use of the a priori bounds on the coefficient $w$ imposed by $\Wad$ and can assume these to result in a priori bounds on $u$, e.g., via PDE theory; tightening these bounds on the state improves the McCormick inequalities to better approximate the equality constraint $z=wu$. 
}
\Margin{R2 3.}

This yields the \emph{convex} minimization problem
\begin{gather}\label{eq:mcc_delta}
\begin{aligned}
\min_{u,w,z}\ & \|Tu - \ydel\|_{Y}  \\
\st\    & Au + f_0 z = f_1, \\
        & \mcc(u,w,z)(x) \le 0 \text{ for a.e.\ } x \in \Omega,\\
        & w \in \conv \Wadh, z \in \Zad,
\end{aligned}\tag{McC$_\delta$}
\end{gather}
where $\Zad$ is a bounded subset of $L^\infty(\Omega)$
such that $uw \in \Zad$ holds for all feasible pairs
$(u,w)$ of \eqref{eq:inv_delta} or \eqref{eq:inv}.
\revision{As mentioned above, inital estimates on $u_\ell$, $u_u$ can typically be derived analytically , cf. \eqref{eq:pde_bounds_uniform} in \cref{ass:approximation_relaxation}. However, these bounds are extremely conservative and also computationally useless, since they involve unknown constants from Sobolev imbeddings or the Poincar\'{e} inequality and its generalizations, cf., e.g.,  \cite{leyffer2025mccormick}.
Consequently, these bounds must be sharpened
}
\Margin{R2 4.}
through techniques like optimization-based bound tightening (OBBT) \cite{gleixner2017three}. This yields
a sequence of linear programs that need to be solved in several rounds. The number of
linear programs is proportional to the number of McCormick inequalities so that it depends
linearly on the number of nodes of our discretization grid if we choose to discretize $u$ with
a finite-element ansatz with a nodal basis for $u$. Since this is computationally intractable
for any meaningful discretization, we employ recent ideas from \cite{leyffer2025mccormick}
and replace the inequality system $\mcc(u, w, z) \le 0$ by its locally-averaged counterpart
\[ 
\mcc_\tau^i(u, w, z) \coloneqq 
\left(
\begin{aligned}
u_\ell^i (P_\tau w) + (P_\tau u) w_\ell^i - u_\ell^i w_\ell^i - z_i\\ 
u_u^i (P_\tau w) + (P_\tau u) w_u^i - u_u^i w_u^i - z_i\\
z_i - u_u^i (P_\tau w) - (P_\tau u) w_\ell^i + u_u^i w_\ell^i\\
z_i - u_\ell^i (P_\tau w) - (P_\tau u) w_u^i + u_\ell^i w_u^i\\
\end{aligned}
\right)
\le 0 \text{ a.e.\ in } Q^i_\tau \text{ for all } i \in \{1,\ldots,N_\tau\},
\]
where $\calQ_\tau \coloneqq \{Q_\tau^1,\ldots,Q_\tau^{N_{\tau}}\}$ is a partition of
$\Omega$ on which a finite-dimensional
ansatz space $\Zadtau$ that captures $(P_\tau w)(P_\tau u)$
is defined so that $z \in \Zadtau$. Because
$(P_\tau w)(P_\tau u)$ is constant per $Q_\tau^i$,
the piecewise constant functions on the partition
$\calQ_\tau$ are the canonical ansatz space for $\Zadtau$
but higher-order ansatz spaces are conceivable too.
In this way, one obtains McCormick inequalities for the 
(partially) locally-averaged state equation
\begin{gather}\label{eq:pde_tau}
 A u + f_0 (P_\tau u) (P_\tau w) = f_1.
\end{gather}
We obtain the convex problem with finitely many linear inequality constraints
\begin{gather}\label{eq:mcc_delta_tau}
\begin{aligned}
\min_{u,w,z}\ & \|Tu - \ydel\|_{Y} \\
\st\    & Au + f_0 z = f_1, \\
        & \mcc^i_\tau(u,w,z)(x) \le 0 \text{ for a.e.\ } x\in  Q^i_\tau \text{ for all } i \in \{1,\ldots,N_\tau\},\\
        & w \in \conv \Wadh, z \in \Zadtau,
\end{aligned}\tag{McC$_{\delta\tau}$}
\end{gather}
which is much better suited for optimization-based bound-tightening since the number of linear programs
that need to be solved can be controlled through $\tau$. The problem \eqref{eq:mcc_delta_tau}
itself can be interpreted as the McCormick relaxation of the following problem
\begin{gather}\label{eq:inv_delta_tau}
\begin{aligned}
\min_{u,w}\ & \|Tu - \ydel\|_{Y} \\
\st\    & Au + f_0 (P_\tau u)(P_\tau w) = f_1, \\
& w(x) \in \Wadh \coloneqq \{
w \in V_h : w \in \Wad \},
\end{aligned}\tag{I$_{\delta\tau}$}
\end{gather}

We analyze the approximation relationship
that is sketched in \cref{fig:tasks}.
\begin{figure}
\centering
\begin{tikzpicture}
\filldraw[black!10!white]  (2,0) rectangle (6,1);
\node[align=left] at (4,0.5) {\eqref{eq:inv}};
\draw[->,very thick] (4, 0) -- (4, -1);
\node[align=left,anchor=north west] at (.6125,0) {{\footnotesize noisy observations} \\ {\footnotesize regularization by disc.}};
\node[align=left,anchor=north west] 
	at (4,-0.2)
	{\footnotesize \cref{prp:reg_dis_cor}};
\filldraw[black!10!white] (2,-2) rectangle (6,-1);
\node[align=left] 
	at (4,-1.5)
	{\eqref{eq:inv_delta}};
\filldraw[black!10!white] (-3,-4) rectangle (1,-3);
\node[align=left] 
	at (-1,-3.5)
	{\eqref{eq:inv_delta_tau}};
\draw[->,very thick] (4, -2) -- (4, -3);
\node[align=left,anchor=north west] 
	at (2.125,-2)
	{{\footnotesize McCormick}\\ {\footnotesize relaxation}};
\node[align=left,anchor=north west] 
	at (-0.5,-2.25)
	{\footnotesize \cref{prp:approx_tau}};

\filldraw[black!10!white]  (2,-4) rectangle (6,-3);
\node[align=left] 
	at (4,-3.5)
	{\eqref{eq:mcc_delta}};
\node[align=left,anchor=north west] 
	at (2.125,-3.3)
	{\footnotesize convex};
\node[align=left,anchor=north west] 
	at (-1.7,-1.625)
	{{\footnotesize local ave-}\\ {\footnotesize raging}};
\node[align=left,anchor=north west] 
	at (4,-2.25)
	{\footnotesize \cref{prp:relax_delta}};
	
\filldraw[black!10!white]  (2,-6) rectangle (6,-5);
\node[align=left] 
	at (4,-5.5)
	{\eqref{eq:mcc_delta_tau}};
\node[align=left,anchor=north west] 
	at (2.125,-5.3)
	{\footnotesize convex};
\node[align=left,anchor=north west] 
	at (-2.25,-4.5)
	{{\footnotesize McCormick}\\ {\footnotesize relaxation}};
\node[align=left,anchor=north west] 
	at (-0.25,-4.5)
	{\footnotesize \cref{prp:relax_tau}};

\draw[very thick,->,bend right,looseness=0.75] (2,-1.5) to (-1,-3);
\draw[very thick,->,bend right,looseness=1] (-1,-4) to (2,-5.5);

\node[align=left,anchor=north west] 
at (-2.25,1.25) {{\footnotesize\textcolor{darkgreen}{\textbf{Approximate solutions}}}};

\draw[darkgreen,very thick,->,bend left,looseness=3.5,out=90,in=90] (2,-1.5) to (2,0.5);
\draw[darkgreen,very thick,->,bend left,looseness=1,out=90,in=150] (-3,-3.5) to (2,-1.5);
\node[align=left,anchor=north west] 
	at (-2,-0.125) {{\footnotesize\textcolor{darkgreen}{$h(\delta)$, $\delta \to 0$}}};
\node[align=left,anchor=north west] 
	at (-2.625,-1.25) {{\footnotesize\textcolor{darkgreen}{$\tau \to 0$}}};

\node[align=left,anchor=north west] 
at (6.25,1.25) {{\footnotesize\textcolor{ceruleanblue}{\textbf{Approximate lower bounds}}}};

\draw[ceruleanblue,very thick,<-,bend left,out=90,in=90,looseness=2.5] 
	(6,.5) to (6,-3.5);
\draw[ceruleanblue,very thick,<-,bend left,out=90,in=90,looseness=2.5]
	(6,-1.5) to (6,-5.5);
\draw[ceruleanblue,very thick,<-,bend left,out=90,in=90,looseness=2.5] 
	(6,.5) to (6,-5.5);
\node[align=left,anchor=north west] 
	at (9.0625,-2.25)
	{\footnotesize\textcolor{ceruleanblue}{$\delta,\tau \to 0$}};
\node[align=left,anchor=north west]
	at (7.4,-4.25)
	{\footnotesize\textcolor{ceruleanblue}{$\tau \to 0$}};
\node[align=left,anchor=north west] 
	at (7.4,-0.25)
	{\footnotesize\textcolor{ceruleanblue}{$\delta \to 0$}};
	
\node[align=left,anchor=north west] 
	at (6.1,-0.6125)
	{\footnotesize \cref{prp:I_approx_lb_McCdelta}};
	
\node[align=left,anchor=north west] 
	at (6.1,-3.9)
	{\footnotesize \cref{prp:Idelta_approx_lb_McCdeltatau}};
	
\node[align=left,anchor=north west] 
	at (10.33,-2.25)
	{\footnotesize \cref{prp:I_approx_lb_McCdeltatau}};

\end{tikzpicture}
\caption{Regularization and approximation relationship between the introduced problems.
}\label{fig:tasks}
\end{figure}

\medskip

The remainder of this paper is organized as follows. After a brief collection of some notation and useful facts in \cref{sec:notation}, in \cref{sec:approx-relax}, we prove that \eqref{eq:inv} is approximated by the noisy and regularized problem \eqref{eq:inv_delta} and
\eqref{eq:inv_delta} admits the convex McCormick relaxations \eqref{eq:mcc_delta}, whose dimension can be reduced by means of local averaging as
in \eqref{eq:mcc_delta_tau}. In \cref{sec:approximate_lower_bounds}, we show that the solutions to \eqref{eq:mcc_delta} and 
\eqref{eq:mcc_delta_tau} directly provide approximate lower bounds for the problems
\eqref{eq:inv} and \eqref{eq:inv_delta}. Using these estimates in an error balancing argument, we derive conditions on the discretization
parameters $h$ and $\tau$ as well as on the optimization precision, under which a convergence rate in terms of the noise level can be proven.
Numerical experiments in \cref{sec:compexp} illustrate the performance of the proposed method and the beneficial effect of OBBT
even without executing a very expensive spatial branch-and-bound algorithm by using the solution to \eqref{eq:mcc_delta_tau} to initialize
an NLP solver. 
Some auxiliary abstract results on regularization by discretization of nonlinear ill-posed operator equations are proven in the appendix.

\section{Notation}\label{sec:notation}
For $f \in L^1(\Omega)$, we denote its total variation seminorm by $|\cdot|_{\TV} : L^1(\Omega) \to [0,\infty]$. Similarly,
we denote the \revision{(Gagliardo)} seminorm of the Sobolev space $W^{k,p}(\Omega)$ by $|\cdot|_{W^{k,p}}$ while the norm is denoted by
$\|\cdot\|_{W^{k,p}}$, \revision{cf., e.g.,  \cite{NezzaPalatucciValdinoci:2012}.}
\Margin{R2 5.}

The notation $a \sim b(a)$ is used as a shortcut for existence of constants $c_1$, $c_2 > 0$ such that $c_1 a \le b(a) \le c_2 a$.

$\mathcal{M}_f$ denotes the multiplication operator induced by some function $f:\Omega\mapsto\mathbb{R}$. From Kato--Ponce--Vega estimates  
(see, e.g., \cite{KenigPonceVega1993} for the case of non-integer $t>0$)
\begin{equation}\label{eq:KatoPonceVega}
\begin{aligned}
&\|f\,g\|_{H^t(\Omega)}\leq C(t,\Omega)\Bigl(\|f\|_{H^t(\Omega)}\|g\|_{L^\infty(\Omega)}
+\|g\|_{H^t(\Omega)}\|f\|_{L^\infty(\Omega)}\Bigr)\,, \\ 
&f, \, g\in H^t(\Omega)\cap L^\infty(\Omega)\,.
\end{aligned}
\end{equation}
we can conclude boundedness of $\|\mathcal{M}_f\|_{H^{t}\cap L^\infty \to H^{t}} < \infty$ for $f\in H^{t}\cap L^\infty$ for any $t\geq0$; in case $t>d/2$ we can skip the intersection with $L^\infty(\Omega)$ due to the embedding $H^{t}(\Omega)\hookrightarrow L^\infty(\Omega)$.

In part of the analysis we will make use of the parameter-to-observation map
$F = T \circ S$, where $S:w\mapsto(A+\mathcal{M}_{f_0w})^{-1}f_1$ is the (nonlinear) parameter-to-state map for the constraining differential equation. While $F$ is not continuously invertible as a mapping $L^2(\Omega)\to Y$, its restriction to a finite dimensional subspace $V_h$ is, 
\revision{provided $F$ s injective. This is due to the fact that the $\sup$ in the definition of the stability constant}
\Margin{R2 6.}
\begin{equation}\label{eq:kappa_for_inv}
\kappa_h \ge \sup_{w\in \Wadh \setminus\bigl\{P_{\Wadh}w^\dagger\bigr\}}\frac{\bigl\|w-P_{\Wadh}w^\dagger\bigr\|_{L^2(\Omega)}}{\bigl\|F(w)-F(P_{\Wadh}w^\dagger)
	\bigr\|_Y}.
\end{equation}
\revision{is in fact attained as a maximum over the finite dimensional and bounded, thus compact set $\Wadh \setminus\bigl\{P_{\Wadh}w^\dagger\bigr\}$.
}
In \eqref{eq:kappa_for_inv}, $w^\dagger$ denotes the exact solution to the inverse problem with exact data, that is, $F(w^\dagger)=y^\dagger$.

\section{Approximation and Relaxation Relations}\label{sec:approx-relax}
In this section, we prove that \eqref{eq:inv} is approximated by the noisy and regularized
problem \eqref{eq:inv_delta} and \eqref{eq:inv_delta} admits the convex McCormick relaxations 
\eqref{eq:mcc_delta}. Moreover, the dimension of \eqref{eq:mcc_delta} can be reduced
by means of local averaging in the underlying PDE and the McCormick inequalities,
which yields \eqref{eq:mcc_delta_tau}. 

In the approximation results below, we work with two levels of generality. While \cref{prp:approx_tau}, \cref{prp:relax_delta}, \cref{prp:relax_tau} 
hold under the quite general set of assumptions \cref{ass:approximation_relaxation}, the quantifying results \cref{prp:reg_dis_cor}, \cref{prp:htau_h_upper_bound}, \cref{thm:balancing} need a more concrete setting.

\begin{assumption}\label{ass:approximation_relaxation}
For fixed noise level $\delta > 0$
and mesh sizes $\tau$, $h(\delta) > 0$,
it holds that:
\begin{enumerate}[label=(\arabic*)]
\item
$V_h$ is a closed, convex subset of a
finite-dimensional ansatz space for $L^2(\Omega)$.
\item\label{itm:pde_sol} For all $w \in \Wad$, \eqref{eq:pde}
and \eqref{eq:pde_tau} admit unique solutions in $U$.
\item\label{itm:pde_box_ass} For all $w \in \Wad$, the solution $u$
to \eqref{eq:pde} satisfies
\begin{gather}\label{eq:pde_bounds_uniform}
   u_\ell(x) \le u(x) \le u_u(x)
   \text{ for a.e.\ } x \in \Omega,
\end{gather}
where $u_\ell$, $u_u \in L^\infty(\Omega)$.
\item\label{itm:pde_tau_box_ass} For all $w \in \Wad$, the solution $u$ to
\eqref{eq:pde_tau} satisfies
\begin{gather}\label{eq:pde_tau_bounds_uniform}
u_\ell^i \le (P_\tau u)(x) \le u_u^i
\text{ for a.e.\ } x \in Q_\tau^i
\text{ for all } i \in \{1,\ldots,N_\tau\},
\end{gather}
where $u_\ell^i$, $u_u^i \in \R$ for all 
$i \in \{1,\ldots,N_\tau\}$.
%
%
where $w_\ell$, $w_u \in L^\infty(\Omega)$.
\item\label{itm:Wadh_box_ass} For all $w \in \Wadh$, it holds that
\[ w_\ell^i \le (P_\tau w)(x) \le w_u^i
\text{ for a.e.\ } x \in Q_\tau^i
\text{ for all } i \in \{1,\ldots,N_\tau\},
\]
where $w_\ell^i$, $w_u^i \in \R$ for all
$i \in \{1,\ldots,N_\tau\}$.
\item\label{itm:Wad_box_proj}
For all $w \in \Wad$, it holds that
$P_{V^h} w = P_{\Wadh} w$.
\item\label{itm:Zad_ass} $\Zad$ is closed, convex, and
bounded in $L^\infty(\Omega)$.
If $u$ solves \eqref{eq:pde} for $w \in \Wadh$, then $uw \in \Zad$.
\item\label{itm:Zadtau_ass} $\Zadtau$  is closed, convex, and
bounded in $L^\infty(\Omega)$ and consists of piecewise
constant functions on $\calQ_\tau$. If $u$ solves \eqref{eq:pde_tau} for
$w \in \Wadh$, then $(P_\tau u)(P_\tau w) \in \Zadtau$.
\end{enumerate}
When sequences of meshes with $h \to 0$ or $\tau \to 0$
are considered, we additionally assume that
\begin{enumerate}[resume,label=(\arabic*)]
\item there exist $c_1$, $c_2 > 0$, independently of $h$ and $\tau$
such that for all $Q_h^i \in \calQ_h$ and all
      $Q_\tau^i \in \calQ_\tau$, there exists a ball $B \subset \R^d$ such that
      $Q_h^i \subset B$ and $c_1 |B| \le |Q_h^i|$, respectively
      $Q_\tau^i \subset B$ and $c_2 |B| \le |Q_\tau^i|$, holds.
\end{enumerate}
\end{assumption}

\begin{remark}
The boundedness assumed in \cref{ass:approximation_relaxation} 
\ref{itm:Zad_ass} and \ref{itm:Zadtau_ass}
can be ensured by additional bound constraints on $z$ that
are compatible with the feasible
set structure. In particular, we assume uniform bounds
on $w$ in \cref{ass:approximation_relaxation}  \ref{itm:Wadh_box_ass}
and that \eqref{eq:pde} and \eqref{eq:pde_tau} imply
uniform bounds on $u$ in
\cref{ass:approximation_relaxation} \ref{itm:pde_box_ass}
and \ref{itm:pde_tau_box_ass}. Pairwise multiplication
pointwise a.e.\ directly gives such
bound constraints on $z$.

We note that an alternative is to add the uniform
bounds in \eqref{eq:pde_bounds_uniform} 
and \eqref{eq:pde_tau_bounds_uniform}
as state constraints to the problem formulations 
\eqref{eq:mcc_delta} and \eqref{eq:mcc_delta_tau}.
\end{remark}

\begin{remark}
We note that \cref{ass:approximation_relaxation} \ref{itm:Wad_box_proj} 
\revision{may not}
\Margin{R2 7.}
be satisfied
if $\Wad$ and thus $\Wadh$ are nonconvex. In this case, one may first relax $\Wad$ to $\conv \Wad$
and then apply the derivation of McCormick relaxations and approximation results using $\conv \Wad$
instead of $\Wad$.
\revision{In this case, one then may need to apply a potentially expensive global optimization algorithm such as branch-and-bound
to close the gap to the unrelaxed problem, which may generally be large.}
\Margin{R1 3.}
\end{remark}

\paragraph{\eqref{eq:inv_delta} approximates \eqref{eq:inv}}

Since our analysis relies on error estimates on Sobolev spaces, in this paragraph we concretize the setting to $U\subseteq H^{2s}(\Omega)$ for some $s\in(0,1]$. This, for $s\in(0,1)$  includes, e.g., the fractional Laplacian $A=(-\Delta)^s$ equipped with Dirichlet boundary conditions $u=g$ in $\mathbb{R}^d\setminus\Omega$ or for $s=1$ any elliptic differential operator $Au=\nabla\cdot (M\nabla u)$ with uniformly positive definite coefficient matrix $M\in C^1(\Omega;\mathbb{R}^{d\times d})$, again equipped with Dirichlet boundary conditions. Other boundary conditions could be incorporated in a straightforward manner, but at the cost of additional technicalities that we avoid here for clarity of exposition.
\revision{
On the one hand,}
\Margin{R2 8.} 
this allows to vary the degree of ill-posedness of the inverse problem. On the other hand, due to its nonlocal character in the $s\in(0,1)$
case, the
inverse problem with a single observation exhibits uniqueness even from observations on an arbitrary small open subset of
$\mathbb{R}^d$~\cite{GhoshRuelandSaloUhlmann:2020,Rueland:2021}, while in case $s=1$ the full Neumann-to-Dirichlet map would be required for this purpose.\footnote{While going further into detail about uniqueness questions would be beyond the scope of this paper, we point to the fact that that uniqueness from a single partial observation can also be achieved by incorporating appropriate a priori knowledge into the admissible set $\Wad$.}

The discretization space $V_h$ is defined by the piecewise polynomials on a shape-regular partition $\calQ_h$ of $\Omega$, with $h$ being the maximal 
diameter of the elements. We here focus on piecewise constant functions, as this matches the projection spaces used for local averaging.

To prove convergence of a solution to \eqref{eq:inv_delta} to a solution of \eqref{eq:inv}, 
we observe that \eqref{eq:inv_delta} corresponds to the so-called least squares version of regularization by projection and apply \cref{prp:regdis}.
To this end, we have the following two options.
\begin{itemize}
\item Reduced formulation $\xi=w$, $\mathbb{F}(\xi)=F(w)=TS(w)$. Here $S:w\mapsto u$ solving $Au + f_0 uw = f_1$ is the parameter-to-state map.
\item All-at-once formulation $\xi=(u,w)$, $\mathbb{F}(\xi)=(Au + f_0 uw, Tu)$. Here, the discretization space $\mathbb{X}_h$ incorporates both the projection of $w$
onto the piecewise constants and the FE discretization of $u$.
\end{itemize}
We first of all focus on the former and in \cref{rem:regdis_all-at-once} indicate the modifications required for the latter.

\begin{proposition}\label{prp:reg_dis_cor}
Let \cref{ass:approximation_relaxation} and $s\in(0,1]$ hold. 
\Margin{R2 9.}
Let  $A:U\subseteq H^{2s}(\Omega)\to L^2(\Omega)$
and $T:L^2(\Omega)\to Y$ be  isomorphisms, 
$A:L^2(\Omega)\to H^{-2s}(\Omega)$ bounded,
$f_0, 1/f_0\,\in H^{2s}(\Omega)\cap L^\infty(\Omega)$, $f_1\in L^2(\Omega)$, and let  $(w^\dagger,u^\dagger)\in \BV(\Omega)\times H^{2s}(\Omega)\cap L^\infty(\Omega)$ with $1/S(P_{\Wadh} w^\dagger)\in L^\infty(\Omega)$ for all $h>0$
be the exact solution to \eqref{eq:inv}, that is, $Tu^\dagger=y^\dagger$, $w^\dagger\in\Wad$, 
and denote by $(w_h^\delta,u_h^\delta)$ a solution to \eqref{eq:inv_delta}.
Then a choice 
\begin{equation}\label{eq:hdelta}
h(\delta)\sim \delta^{\max\big\{\frac{2}{1 + 4s},\frac{2d}{3d+4sd}\big\}}
\end{equation}
yields convergence with a rate 
\[
\|w_{h(\delta)}^\delta-w^\dagger\|_{L^2(\Omega)}=
\left\{
\begin{matrix}
O\left(\delta^{\frac{1}{1+4s}}\right)
& \text{ if } 3/2 + 2s/d - 2s > 1/2, \\
O\left(\delta^{\frac{3d+4s - 4sd}{3d + 4s}}\right)
& \text{ if } 3/2 + 2s/d - 2s \le 1/2 \\
\end{matrix}
\right.
\]
as $\delta \to 0$.
\end{proposition}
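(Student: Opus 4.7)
The plan is to invoke the abstract regularization-by-discretization result \cref{prp:regdis} in the appendix applied to the reduced formulation $\mathbb{F}(w)=F(w)=T\circ S(w)$ with finite-dimensional ansatz space $V_h$ of piecewise constants on $\calQ_h$. This framework provides an error decomposition of the schematic form
\[
\|w_h^\delta-w^\dagger\|_{L^2(\Omega)}\le C\bigl(\|w^\dagger-P_{V_h}w^\dagger\|_{L^2(\Omega)}+\kappa_h\,\delta\bigr),
\]
with $\kappa_h$ defined in \eqref{eq:kappa_for_inv}, modulo higher-order tangential-cone terms that can be absorbed for sufficiently small $h$. The remainder of the proof splits into three tasks: estimating the projection error, bounding $\kappa_h$, and choosing $h$ optimally in $\delta$.

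For the projection error, since $w^\dagger\in\BV(\Omega)$ and $\calQ_h$ is shape-regular, the standard piecewise-constant approximation estimate yields $\|w^\dagger-P_{V_h}w^\dagger\|_{L^1(\Omega)}\le Ch\,|w^\dagger|_{\TV}$. Combined with the uniform $L^\infty$-bound inherited from $\Wad$, H\"older interpolation between $L^1$ and $L^\infty$ then gives $\|w^\dagger-P_{V_h}w^\dagger\|_{L^2(\Omega)}\le Ch^{1/2}$.

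To estimate $\kappa_h$, I linearize $F$ at $w^\ast_h\coloneqq P_{V_h}w^\dagger$. Differentiating the state equation gives $F'(w)\eta=-T(A+\mathcal{M}_{f_0 w})^{-1}\mathcal{M}_{f_0 S(w)}\eta$, so that for $\eta\in V_h$,
\[
\eta=-\mathcal{M}_{1/(f_0 S(w^\ast_h))}\,(A+\mathcal{M}_{f_0 w^\ast_h})\,T^{-1}F'(w^\ast_h)\eta.
\]
The assumed boundedness $A:L^2(\Omega)\to H^{-2s}(\Omega)$, the regularity $f_0,\,1/f_0,\,1/S(w^\ast_h)\in H^{2s}\cap L^\infty$, and the Kato--Ponce--Vega bound \eqref{eq:KatoPonceVega} applied on the dual side together give $\|\eta\|_{H^{-2s}(\Omega)}\le C\|F'(w^\ast_h)\eta\|_Y$. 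The inverse inequality $\|v_h\|_{L^2(\Omega)}\le Ch^{-2s}\|v_h\|_{H^{-2s}(\Omega)}$ for piecewise constants on shape-regular meshes, available in the regime $3/2+2s/d-2s>1/2$, then yields $\|\eta\|_{L^2(\Omega)}\le Ch^{-2s}\|F'(w^\ast_h)\eta\|_Y$. A tangential-cone/Taylor-remainder argument using Lipschitz continuity of $S$ passes from $F'$ to secants of $F$, so that $\kappa_h\le Ch^{-2s}$ in this regime. Balancing $h^{1/2}\sim h^{-2s}\delta$ then gives $h(\delta)\sim\delta^{2/(1+4s)}$ and the rate $O(\delta^{1/(1+4s)})$.

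In the complementary regime $3/2+2s/d-2s\le 1/2$, the above inverse inequality is no longer directly applicable at order $2s$ because piecewise constants lie in $H^t(\Omega)$ only for $t<1/2$. The bound on $\kappa_h$ must then be routed through a Sobolev embedding $H^{2s}\hookrightarrow L^p$ with $p$ determined by $d$ and $s$, combined with a correspondingly weaker inverse inequality on the $L^p$ scale; this yields $\kappa_h\le Ch^{-\gamma}$ with a dimension-dependent exponent $\gamma>2s$ for which balancing against $h^{1/2}$ produces the second choice $h(\delta)\sim\delta^{2d/(3d+4sd)}$ and the rate $O(\delta^{(3d+4s-4sd)/(3d+4s)})$. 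The main obstacle throughout is the sharp estimate on $\kappa_h$: one has to carefully keep track of which multiplier estimates and inverse inequalities are admissible in each regime, and the case distinction arises precisely from the threshold beyond which piecewise-constant spaces lose the required fractional-Sobolev smoothness.
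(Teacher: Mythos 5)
Your overall strategy---invoking \cref{prp:regdis} for the reduced formulation, obtaining the $O(h^{1/2})$ projection error from $w^\dagger\in\BV(\Omega)$ via H\"older interpolation, and bounding $\kappa_h\lesssim h^{-2s}$ through a duality/stability estimate combined with an inverse inequality---matches the paper. However, there is a genuine gap at the very first step: the error decomposition you write down omits the term $2\kappa_h\,\|F(P_{V_h}w^\dagger)-F(w^\dagger)\|_Y$ that appears in \eqref{eq:regdis_error_estimate}, dismissing it as a ``higher-order tangential-cone term that can be absorbed.'' It cannot be absorbed. The paper must estimate $\|F(P_{V_h}w^\dagger)-F(w^\dagger)\|_Y=O(h^{\ell(s)})$ with $\ell(s)=3/2+2s/d$ via the Aubin--Nitsche duality argument \eqref{eq:AubinNitsche}, which hinges on the approximation property \eqref{eq:approx_w_2} of the embedding $H^{2s}(\Omega)\hookrightarrow W^{1,r^*}(\Omega)$; after multiplication by $\kappa_h\sim h^{-2s}$ this contributes an $O(h^{\ell(s)-2s})$ term, and it is precisely this term that dominates (and determines the rate) in the regime $3/2+2s/d-2s\le 1/2$. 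Without it you have no mechanism that produces the second rate $O\bigl(\delta^{(3d+4s-4sd)/(3d+4s)}\bigr)$.

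The substitute mechanism you propose for the case distinction is incorrect. The inverse estimate used to bound $\kappa_h$ is $\|\tilde w-w\|_{H^{-2s}(\Omega)}\ge h^{2s}\|\tilde w-w\|_{L^2(\Omega)}$ on the \emph{negative}-order Sobolev scale, where piecewise constants pose no regularity obstruction; consequently $k(s)=2s$ and $\kappa_h\sim h^{-2s}$ in \emph{both} regimes in the paper. The threshold $3/2+2s/d-2s>1/2$ is exactly the comparison $\ell(s)-k(s)>1/2$ between the two approximation-error exponents, not a statement about the fractional smoothness of piecewise-constant functions. Moreover, your second-regime balancing $h^{1/2}\sim h^{-\gamma}\delta$ necessarily yields a rate of the form $\delta^{1/(1+2\gamma)}$ (the rate being $h^{1/2}$), which cannot reproduce the stated exponent; in the paper the second-regime rate is $h^{\ell(s)-k(s)}$ with $h\sim\delta^{1/\ell(s)}$. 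A minor further point: the tangential-cone/Taylor-remainder step you invoke to pass from $F'$ to secants is unnecessary here, since for the bilinear state equation the secant identity $F(\tilde w)-F(w)=T(A+\mathcal{M}_{f_0\tilde w})^{-1}\mathcal{M}_{f_0 S(w)}[\tilde w-w]$ is exact, cf.\ \eqref{eqn:FwtilFw_lower_0}.
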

\begin{proof}
\Margin{R2 12.}
\revision{
We will use \cref{prp:regdis} and to this end need to verify the key estimate for convergence \eqref{eq:conv-cond}, which here reads
\begin{equation}\label{eq:conv-cond_wh}
\kappa_h \big\|F(P_{\Wadh} w^{\dagger}) - F(w^\dagger)\big\|_Y \to 0\text{ as } h \to 0
\end{equation}
Its verification} relies on one hand on an approximation error estimate. 
Here we have the estimates
\begin{equation}\label{eq:approx_w}
\|w - P_{V_h}w\|_{L^p}^p
\le \|w - P_{V_h}w\|_{L^{1}}\|w - P_{V_h}w\|_{L^\infty}^{p-1} 
\le \sqrt{d}\,h\,|w|_{\TV}\text{diam}(W)^{p - 1}
\end{equation}
for $w \in \Wad$,
where the first inequality is due to H\"older's inequality and the
second due to, e.g., Theorem 12.26, in particular (12.24), in
\cite{maggi2012sets}.
We actually have to bound $\|F(P_{V_h} w) - F(w)\|_{Y}$, cf. \eqref{eq:conv-cond}.
To this end, we make use of an approximation result 
\begin{equation}\label{eq:approx_w_2}
\|(I-P_{V_h})\psi\|_{L^{r^*}(\Omega)}\leq C \, h |\psi|_{W^{1,r^*}(\Omega)}
\leq C\, C_{H^{2s},W^{1,r^*}}^\Omega \, h \,
\|\psi\|_{H^{2s}(\Omega)}, 
\end{equation}
where $C_{H^{2s},W^{1,r^*}}^\Omega$ denotes the embedding constant of the continuous embedding $H^{2s}(\Omega)\hookrightarrow W^{1,r^*}(\Omega)$, cf.~\cite{AdamsFournier:2003} for $2s-d/2\geq -d/r^*$, that is, $1/r=1-1/r^*\leq 2s/d+1/2 \eqqcolon 1/r(s)$ in the estimate 
\begin{equation}\label{eqn:approx_err}
\|F(w_h) - F(w)\|_{Y}
\leq \|T\|_{L^2(\Omega)\to Y} \|(A+\mathcal{M}_{f_0w_h})^{-1}\mathcal{M}_{f_0S(w)}(w_h-w)\|_{L^2(\Omega)},
\end{equation}
Here we have used that 
$u_h-u \coloneqq S(w_h)-S(w)$ for $w_h = P_{V_h} w$ 
(noting that $P_{V_h} w= P_{\Wadh} w$ for $w\in\Wad$ by \cref{ass:approximation_relaxation} \ref{itm:Wad_box_proj})
solves
\[ 0=(Au_h + f_0 uw_h) -(Au + f_0 uw)= A(u_h-u) + f_0 w_h (u_h-u) + f_0u (w_h-w) \]
For $w_h=P_{V_h} w$, using an Aubin--Nitsche trick-type argument, 
\revision{cf., e.g., \cite[Sec. 2.3, Proposition 2.31]{ErnGuermond:2004},} 
\Margin{R2 10.}
we obtain
\begin{equation}\label{eq:AubinNitsche}
\begin{aligned}
\hspace{1em}&\hspace{-1em}\|(A+\mathcal{M}_{f_0w_h})^{-1}\mathcal{M}_{f_0S(w)}(w_h-w)\|_{L^2(\Omega)}\\
&=\sup_{\phi\in C^\infty(\Omega)\setminus\{0\}}
\frac{\int_\Omega (w_h-w) \mathcal{M}_{f_0S(w)} (A^*+\mathcal{M}_{f_0w_h})^{-1}\phi\,dx}{\|\phi\|_{L^2(\Omega)}}\\
&\revision{=\sup_{\phi\in C^\infty(\Omega)\setminus\{0\}}
\frac{\int_\Omega (w_h-w) (I-P_{V_h})\,\mathcal{M}_{f_0S(w)} (A^*+\mathcal{M}_{f_0w_h})^{-1}\phi\,dx}{\|\phi\|_{L^2(\Omega)}}
}\\
&\underset{\eqref{eq:approx_w_2}}\leq\|w_h-w\|_{L^r(\Omega)} \|\mathcal{M}_{f_0S(w)} (A^*+\mathcal{M}_{f_0w_h})^{-1}\|_{L^2(\Omega)\to H^{2s}(\Omega)} \, C\, C_{H^{2s},W^{1,r^*}}^\Omega \, h \\
&\underset{\eqref{eq:approx_w}}\lesssim h^{1+1/r(s)} = h^{\ell(s)}.
\end{aligned}
\end{equation}
with \revision{$1/r(s)=2s/d+1/2$, hence}
\Margin{R2 11.}
\begin{equation}\label{eqn:ell_s}
\ell(s)=3/2+2s/d.
\end{equation}

On the other hand, we need to estimate the stability constant $\kappa_h$ introduced in \eqref{eq:kappa_for_inv}. To this end, we first
of all derive Lipschitz stability of the inverse problem in an appropriate \emph{artificial} pre-image space topology and lift this
to the true image space topology by means of an inverse estimate. In the reduced setting we have, with $u=S(w)$
for $w$, $\tilde{w} \in \Wad$, $w \neq \tilde{w}$, 
\begin{equation}\label{eqn:FwtilFw_lower_0}
\begin{aligned}
&\|F(\tilde{w})-F(w)\|_{Y}= \|T(A+\mathcal{M}_{f_0\tilde{w}})^{-1}\,\mathcal{M}_{f_0u}[\tilde{w}-w]\|_{Y}\\
&\geq \|T^{-1}\|_{Y\to L^2(\Omega)}^{-1} \|A+\mathcal{M}_{f_0\tilde{w}}\|_{L^2(\Omega)\to H^{-2s}(\Omega)}^{-1}
\|\mathcal{M}_{1/(f_0u)}\|_{H^{-2s}(\Omega)\to H^{-2s}(\Omega)}^{-1}\, \|\tilde{w}-w\|_{H^{-2s}(\Omega)}.
\end{aligned}
\end{equation}
The required boundedness of the multiplication operator $\mathcal{M}_{1/(f_0u)}:H^{-2s}(\Omega)\to H^{-2s}(\Omega)$, $v\mapsto (1/(f_0u))\cdot v$
follows from $1/(f_0u) \in H^{2s}(\Omega)\cap L^\infty(\Omega)$, 
for $w=P_{\Wadh} w^\dagger$, due to our assumption $1/S(P_{\Wadh} w^\dagger)\in L^\infty(\Omega)$ for all $h>0$, cf., \eqref{eq:kappa_for_inv}. 
For boundedness of the multiplication operator $\mathcal{M}_{f_0\tilde{w}}:\,v\mapsto {f_0\tilde{w}}\cdot v$, we can use ${\tilde{w}}\in\Wad$, which due to the bounded constraints in $\Wad$ includes an $L^\infty$-bound on ${\tilde{w}}$, so that 
$\mathcal{M}_{\tilde{w}}$ is even bounded as an operator from $L^2(\Omega)$ into itself.

An inverse estimate reads
\[ \|\tilde{w} - w\|_{H^{-2s}(\Omega)} \ge h^{2s}\|\tilde{w} - w\|_{L^2(\Omega)};  \]
see Theorem 3.6 in \cite{graham2005finite} and yields
\begin{equation}\label{eqn:FwtilFw_lower}
\|F(\tilde{w})-F(w)\|_{Y}\gtrsim h^{k(s)} \|\tilde{w}-w\|_{L^2(\Omega)},
\end{equation}
with the choice
\begin{equation}\label{eqn:k_s}
k(s)=2s.
\end{equation}
Hence, \eqref{eq:kappa_for_inv} holds with $\kappa_h \sim h^{-k(s)}$.
\revision{Consequently, we can certify \eqref{eq:conv-cond_wh}
and thus use \eqref{eq:regdis_error_estimate} in \cref{prp:regdis}.
We} balance the $O(h^{\frac{1}{2}})$-term
$\|(P_{\Wadh} w^{\dagger}) - w^\dagger\|_{L^2(\Omega)}$,
the $O(h^{\ell(s) - k(s)})$-term
$2\kappa_h \|F(P_{\Wadh} w^{\dagger}) - F(w^\dagger)\|_Y$,
and the $O(h^{-k(s)}\delta)$-term $2\kappa_h\delta$,
where we can combine the first two terms into one
$O(h^{\min\{\frac{1}{2},\ell(s) - k(s)\}})$-term.
This yields the balanced choice
\[ h(\delta)^{\min\big\{\frac{1}{2} + k(s),\ell(s)\big\}}
   \sim \delta
   \quad
   \Leftrightarrow
   \quad 
   h(\delta) \sim 
   \delta^{\max\big\{\frac{2}{1 + 4s},\frac{2d}{3d+4sd}\big\}}.
\]
\end{proof}
We highlight that $\ell(1) = 3/2 + 2/d > 2 = k(1)$ in the argument above so that the important
(local) case $s = 1$ is feasible for all $d \in \{1,2,3\}$.
\begin{remark}
The condition $1/S(P_{\Wadh} w^\dagger)\in L^\infty(\Omega)$ for all $h>0$ follows from \cref{ass:approximation_relaxation} \ref{eq:pde_bounds_uniform} with $u_\ell(x)u_u(x)\geq c>0$ for a.e. $x \in\Omega$. The latter can be enforced, e.g., by a maximum principle with boundary data $g$ bounded away from zero under appropriate sign conditions on $w$ (imposed, e.g. via the constraints in $\Wad$) and $f_0$, cf. e.g., \cite{EvansBook}.
\end{remark}
\begin{remark}\label{rem:regdis_all-at-once}
In the all-at-once setting, the counterpart of \eqref{eqn:approx_err} also involves an FE error estimate on $u$, which can be obtained by assuming sufficient smoothness on the exact solution $u^\dagger$\footnote{we don't even need to derive this from elliptic regularity, since $u^\dagger$ is part of the solution to the inverse problem, on which we can make assumptions} and using sufficiently high order polynomials and/or a finer grid for $u$ as compared to the one for $w$.

\revision{For deriving lower bounds on the difference of function values of $F$}
\Margin{R2 13.}
that we need for bounding $\kappa_h$, we have, with $\mathbb{Y}=Y^{mod}\times Y$, 
\[
\|\mathbb{F}(\tilde{u},\tilde{w})-\mathbb{F}(u,w)\|_{Y^{mod}\times Y}= 
\|(A+\mathcal{M}_{f_0\tilde{w}})[\tilde{u}-u]+\mathcal{M}_{f_0u} [\tilde{w}-w]\|_{Y^{mod}}+\|T(\tilde{u}-u)\|_{Y}
\]
hence with $\|\cdot\|_{Y^{mod}}=c\|\cdot\|_{H^{-2s}(\Omega)}$ for $c$ small enough
\[
\begin{aligned}
&\|\mathbb{F}(\tilde{u},\tilde{w})-\mathbb{F}(u,w)\|_{Y^{mod}\times Y}\\
&\geq 
\left(\|T^{-1}\|_{Y\to L^2(\Omega)}^{-1} - c\,\|(A+\mathcal{M}_{f_0\tilde{w}})\|_{L^2(\Omega)\to H^{-2s}(\Omega)}\right)\, \|\tilde{u}-u\|_{L^2(\Omega)}\\
&\quad+c\,\|\mathcal{M}_{1/(f_0u)}\|_{H^{-2s}(\Omega)\to H^{-2s}(\Omega)}^{-1}\, \|\tilde{w}-w\|_{H^{-2s}(\Omega)}
\end{aligned}
\]
thus, with Theorem 3.6 in \cite{graham2005finite} as above and, similarly,
\begin{equation}\label{eq:inverse-estimate-u}
\|u\|_{L^2(\Omega)}\gtrsim h^{k(s)} \|u\|_{H^{2s}(\Omega)}, \quad u\in U_h,
\end{equation}
(which entails some requirements on the FE space for $u$; a choice $U_h \coloneqq \{v\in H^{2s}_0(\Omega)\,:\, v\vert_K\in\mathcal{P},\ K\in\calQ_h\}$ suffices for this purpose), we obtain 
\[
\|\mathbb{F}(\tilde{u},\tilde{w})-\mathbb{F}(u,w)\|_{Y^{mod}\times Y}\gtrsim h^{-k(s)} \left(\|\tilde{u}-u\|_{H^{2s}(\Omega)}+\|\tilde{w}-w\|_{L^2(\Omega)}\right),
\]
hence again the convergence condition \eqref{eq:conv-cond} holds if $\ell(s)>k(s)$ for $\mathbb{X}=H^{2s}(\Omega)\times L^2(\Omega)$, $\mathbb{Y}=H^{-2s}(\Omega)\times Y$.
\end{remark}

\begin{remark}
Since \cref{prp:reg_dis_cor} is about solutions to \eqref{eq:inv_delta}, we note that solutions exist by virtue of
the direct method of calculus of variations using the nonnegativity of $f_0$ to maintain the ellipticity.
\end{remark}

\paragraph{\eqref{eq:inv_delta_tau} approximates \eqref{eq:inv_delta}}
\begin{proposition}\label{prp:approx_tau}
Let \cref{ass:approximation_relaxation} hold with some reflexive space $U$ satisfying $U\stackrel{c}{\hookrightarrow}L^p(\Omega)$ and $U\hookrightarrow L^{\tilde{p}}(\Omega)$ for some $1\leq p<\tilde{p}\leq\infty$ and $A:U\to U^*$ be an isomorphism, $f_0\in L^{p/(p-1)}(\Omega)$, $f_1\in U^*$.

Then the problem \eqref{eq:inv_delta_tau} admits a
minimizer. The mapping $U\times\Wadh \ni (u, w) \mapsto (u_\tau, w) \in U\times\Wadh$
between the feasible sets of \eqref{eq:inv_delta} and \eqref{eq:inv_delta_tau},
where $u$ solves \eqref{eq:pde} and $u_\tau$ solves \eqref{eq:pde_tau}, is onto.

\revision{Then $u_\tau\rightharpoonup u$ in $U$ for $\tau\to0$.}
\Margin{R2 14.}
\end{proposition}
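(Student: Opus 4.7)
The proposition makes three claims: existence of a minimizer for \eqref{eq:inv_delta_tau}, surjectivity of the map $(u,w)\mapsto(u_\tau,w)$, and weak convergence $u_\tau\rightharpoonup u$ as $\tau\to 0$. My plan is to handle them via a common direct-method / compactness skeleton, using \cref{ass:approximation_relaxation} to tame the bilinearity $(P_\tau u)(P_\tau w)$.

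\textbf{Existence.} I would apply the direct method to a minimizing sequence $(u_n,w_n)$. Since $\Wadh$ is a bounded subset of a finite-dimensional space, along a subsequence $w_n\to w^*\in\Wadh$ strongly in every $L^q$, $q<\infty$. The pointwise bounds in \ref{itm:pde_tau_box_ass} and \ref{itm:Wadh_box_ass} give a uniform $L^\infty$-bound on $(P_\tau u_n)(P_\tau w_n)$; multiplication by $f_0\in L^{p/(p-1)}$ together with the embedding $L^{p/(p-1)}\hookrightarrow U^*$ makes the right-hand side of \eqref{eq:pde_tau} bounded in $U^*$, so $A^{-1}$ yields a uniform $U$-bound on $u_n$. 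Reflexivity and the compact embedding $U\stackrel{c}{\hookrightarrow}L^p$ give $u_n\rightharpoonup u^*$ and strong $L^p$-convergence. At fixed $\tau$, $P_\tau$ has finite-dimensional range, hence $P_\tau u_n\to P_\tau u^*$ in every $L^q$. The uniform $L^\infty$-bound combined with this strong convergence lets me pass to the limit in the state equation, and weak lower semicontinuity of $\|T\cdot-\ydel\|_Y$ closes the argument.

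\textbf{Surjectivity.} For any feasible $(u_\tau,w)$ of \eqref{eq:inv_delta_tau}, \cref{ass:approximation_relaxation}~\ref{itm:pde_sol} furnishes a unique $u\in U$ solving \eqref{eq:pde}, which makes $(u,w)$ feasible for \eqref{eq:inv_delta}; by uniqueness of the solution to \eqref{eq:pde_tau} this pair is mapped to the prescribed $(u_\tau,w)$.

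\textbf{Weak convergence.} Fix $w\in\Wadh$ and a sequence $\tau_k\to 0$. As above, the $L^\infty$-bounds from \ref{itm:pde_tau_box_ass}--\ref{itm:Wadh_box_ass} together with $A$ being an isomorphism yield a $\tau$-uniform $U$-bound on $u_{\tau_k}$, so along a subsequence $u_{\tau_k}\rightharpoonup \tilde u$ in $U$ and strongly in $L^p$. To treat $P_{\tau_k}u_{\tau_k}$ I would split
\[
\|P_{\tau_k}u_{\tau_k}-\tilde u\|_{L^p}\le \|P_{\tau_k}\|_{L^p\to L^p}\,\|u_{\tau_k}-\tilde u\|_{L^p}+\|P_{\tau_k}\tilde u-\tilde u\|_{L^p},
\]
and both terms vanish by strong convergence and density of piecewise constants on shape-regular meshes; the same argument gives $P_{\tau_k}w\to w$ in every $L^q$ because $w\in L^\infty$. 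Using the uniform $L^\infty$-bound, the product converges strongly in every $L^q$, so $f_0(P_{\tau_k}u_{\tau_k})(P_{\tau_k}w)\to f_0\tilde u w$ in $U^*$; passing to the limit in \eqref{eq:pde_tau} gives $A\tilde u + f_0\tilde u w = f_1$, hence $\tilde u = u$ by uniqueness in \ref{itm:pde_sol}, and since the cluster point is unique the full sequence converges. The delicate ingredient throughout is precisely this bilinear limit passage: weak-weak is insufficient, and the strategy is to upgrade weak $U$-convergence to strong $L^p$-convergence via the compact embedding and bootstrap with the $L^\infty$-bounds. A point worth pinning down before executing the step is that the $u_\ell^i$, $u_u^i$ in \ref{itm:pde_tau_box_ass} should be uniformly bounded in $\tau$; otherwise the $U$-bound on $u_{\tau_k}$ could degenerate as $\tau_k\to 0$, which is the step that actually drives the argument.
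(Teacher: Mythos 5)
Your proof is correct and follows essentially the same route as the paper: the direct method for existence, uniqueness of the two state equations for surjectivity, and, for the weak convergence, the same upgrade from weak $U$-convergence to strong $L^p$-convergence via the compact embedding, the same splitting $P_\tau u_\tau - \bar{u} = P_\tau(u_\tau-\bar{u}) + (P_\tau\bar{u} - \bar{u})$, and interpolation with the $L^\infty$-bounds to pass to the limit in the bilinear term. Your closing caveat---that the bounds $u_\ell^i$, $u_u^i$ must be uniform in $\tau$---is well taken; the paper relies on this implicitly when invoking \cref{ass:approximation_relaxation} to obtain the $\tau$-uniform $L^\infty$-bound on $P_\tau u_\tau$.
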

\begin{proof}
The existence of solutions for \eqref{eq:inv_delta_tau} follows with the same argument as for
\eqref{eq:inv_delta}, from the direct method of calculus of variations. 
Indeed, the feasible set is bounded in $U$ due to \cref{ass:approximation_relaxation},
\ref{itm:pde_box_ass}, \ref{itm:Wadh_box_ass}, and the estimate 
$\int_\Omega f_0 w u \phi\leq \|f_0\|_{L^{p/(p-1)}(\Omega)}\|w\|_{L^\infty(\Omega)}\|u\|_{L^\infty(\Omega)}\|\phi\|_{L^p(\Omega)}$ for $\phi\in U\hookrightarrow L^p(\Omega)$, and $w$, $u$ satisfying  \cref{ass:approximation_relaxation} \ref{itm:pde_box_ass}, \ref{itm:Wadh_box_ass}, along with the continuous invertibility of $A:U^*\to U$.
The feasible set is obviously also non-empty. 
Thus the existence of a minimizer follows with the weak lower semi-continuity of the objective, implied by boundedness of $T:U\mapsto Y$. 

Due to the respective uniqueness of solutions to \eqref{eq:pde} and \eqref{eq:pde_tau}
for fixed $w$ and the fact that $P_\tau$ is onto, the mapping is $(u,w) \mapsto (u_\tau, w)$ is onto.

Let $(u,w)$ be feasible for \eqref{eq:inv_delta_tau}. 
Then $P_\tau w \to w$ in $L^q(\Omega)$ for all $q \in[1,\infty)$ by virtue of Lebesgue's differentiation theorem and 
the bound constraints in $\Wad$.
Moreover, the uniform boundedness of $P_\tau w$ and $P_\tau u_\tau$ in $L^\infty(\Omega)$ due to \cref{ass:approximation_relaxation} \ref{itm:pde_box_ass}, \ref{itm:Wadh_box_ass}, hence of  $f_0 P_\tau w P_\tau u_\tau$ in $L^{p/(p-1)}(\Omega)\hookrightarrow U^*$ along with boundedness of $A^{-1}:U^*\to U$ imply uniform boundedness of $u_\tau = A^{-1}(f_1-f_0 P_\tau w P_\tau u_\tau)$ in $U$ and thus $u_\tau\weakto \bar{u}$ in $U$ for some $\bar{u}\in U$ with $P_\tau(u_\tau-\bar{u})\to0$ in $L^p(\Omega)$ by compactness of $U\stackrel{c}{\hookrightarrow}L^p(\Omega)$ and boundedness $P_\tau:L^p(\Omega)\to L^p(\Omega)$, after restricting to a subsequence. 
Due to \cref{ass:approximation_relaxation} \ref{itm:pde_box_ass}, we also have $\bar{u}\in L^\infty(\Omega)$, hence $P_\tau \bar{u}-\bar{u}\to 0$ in $L^q(\Omega)$ for all $q \in[1,\infty)$.
Likewise, uniform boundedness of $P_\tau(u_\tau-\bar{u})$ in $L^\infty(\Omega)$ and its convergence to zero in $L^p(\Omega)$, by 
\revision{interpolation of Lebesgue spaces}
\Margin{R2 15.}
also yields $P_\tau(u_\tau-\bar{u})\to0$ in $L^q(\Omega)$ for all $q \in[1,\infty)$.
As a consequence, we have $f_0w (P_\tau u_\tau-\bar{u})=f_0w (P_\tau(u_\tau-\bar{u})+P_\tau \bar{u}-\bar{u}) \to 0$ in $L^{\tilde{p}}(\Omega)^*\hookrightarrow U^*$.
By passing to the limit in \eqref{eq:pde_tau}, 
we thus obtain
\[ \langle A \bar{u} + f_0 w \bar{u}, v\rangle_{U^*,U} = \langle f_1, v\rangle_{U^*,U} \]
for all $v \in U$, which means that $\bar{u}$ solves \eqref{eq:pde} for 
$w$ so that $u = \bar{u}$ follows by uniqueness. Since this argument can be made
for all subsequences of $(u_\tau)_\tau$, the claim follows.
\end{proof}
To obtain quantitative information on the result of \cref{prp:approx_tau}, we observe that the only difference
is in the bilinear terms in \eqref{eq:pde} and \eqref{eq:pde_tau}. We consider the reduced formulations
\[ F(w) = TS(w),\enskip S : w \mapsto u \text{ solves } \eqref{eq:pde} \]
for \eqref{eq:inv_delta} and 
\[ F_\tau(w) = TS_\tau(w),\enskip S_\tau : w \mapsto u \text{ solves } \eqref{eq:pde_tau}. \]
We can derive an approximation relationship that will turn out useful later under the assumptions that the mesh size $\tau$
is chosen smaller than the mesh size $h$ and that the $\tau$-grid is embedded in the $h$-grid. This is formalized in the
following assumption.
\begin{assumption}\label{ass:grid_embedding}
	For all $Q_h \in \calQ_h$, there exist $Q_\tau^{i_1},\ldots,Q_\tau^{i_m} \in \calQ_\tau$, $m \in \N$ such that
	\[ Q_h = Q_\tau^{i_1} \cup \cdots \cup Q_\tau^{i_m}. \]
\end{assumption}
Our approximation result then reads.
\begin{proposition}\label{prp:htau_h_upper_bound}
	Let \cref{ass:approximation_relaxation,ass:grid_embedding} hold, 
    $s\in(0,1]$, $A:H_0^{2s}(\Omega)\to L^2(\Omega)$ an isomorphism, $T:L^2(\Omega)\to Y$ bounded, $f_0, 1/f_0\,\in H^{2s}(\Omega)\cap L^\infty(\Omega)$, $f_1\in L^2(\Omega)$,
	and $w_h \in \Wadh$. Then
	\begin{multline*}
	\|F_\tau(w_h) - F(w_h)\|_{Y}
	\le \\
\revision{C}
\|T\|_{L^2(\Omega)\to Y}
	\|\mathcal{M}_{f_0}\|_{H^{2s} \to H^{2s}} \|(A+\mathcal{M}_{f_0 w_h})^{-*}\|_{L^2(\Omega)\to H^{2s}(\Omega)}
	\|w_h\|_{L^\infty} 
|u_{h\,\tau}|_{H^{2s}} 
\revision{\,\tau^{4s}}
	\end{multline*}
	holds for some 
\revision{$C>0$ depending only on $\Omega$ and $s$.} 
\end{proposition}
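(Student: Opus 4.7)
The plan is to derive an explicit equation for the error $e := S(w_h) - S_\tau(w_h)$ and then apply an Aubin--Nitsche-type duality argument that exploits the $L^2$-orthogonality of the piecewise-constant projection $P_\tau$. Setting $u = S(w_h)$, $u_\tau = S_\tau(w_h)$, and $B := A + \mathcal{M}_{f_0 w_h}$, subtracting \eqref{eq:pde_tau} from \eqref{eq:pde} (each evaluated at $w_h$) and collecting the $w_h$-linear term on the left gives
\[
B(u - u_\tau) = f_0\bigl[(P_\tau u_\tau)(P_\tau w_h) - u_\tau w_h\bigr].
\]
The crucial simplification is available thanks to \cref{ass:grid_embedding}: since $w_h \in \Wadh \subseteq V_h$ is piecewise constant on $\calQ_h$ and the $\tau$-grid refines the $h$-grid, $w_h$ is constant on every $Q_\tau^i$, hence $P_\tau w_h = w_h$ pointwise. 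Consequently
\[
(P_\tau u_\tau)(P_\tau w_h) - u_\tau w_h = w_h (P_\tau u_\tau - u_\tau) \text{ on } \Omega,
\]
so that $u - u_\tau = -B^{-1}[f_0 w_h (u_\tau - P_\tau u_\tau)]$.

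Next, I would bound $\|F_\tau(w_h) - F(w_h)\|_Y = \|T(u-u_\tau)\|_Y \le \|T\|_{L^2 \to Y}\|u-u_\tau\|_{L^2}$ and apply $L^2$-duality, shifting $B^{-1}$ to the test side:
\[
\|u - u_\tau\|_{L^2} = \sup_{\phi \in L^2(\Omega)\setminus\{0\}}\frac{1}{\|\phi\|_{L^2}}
   \int_\Omega (P_\tau u_\tau - u_\tau)\,f_0 w_h\, (A+\mathcal{M}_{f_0w_h})^{-*}\phi \, dx.
\]
Since $P_\tau u_\tau - u_\tau$ is $L^2$-orthogonal to every function that is piecewise constant on $\calQ_\tau$, I can replace $f_0 w_h (A+\mathcal{M}_{f_0w_h})^{-*}\phi$ by $(I - P_\tau)[f_0 w_h (A+\mathcal{M}_{f_0w_h})^{-*}\phi]$ in the integrand without changing its value. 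A second use of \cref{ass:grid_embedding} lets me factor $\|w_h\|_{L^\infty}$ out cleanly: on each $Q_\tau^i$, $w_h$ equals a constant $c_i$ with $|c_i| \le \|w_h\|_{L^\infty}$, so $(I - P_\tau)[f_0 w_h (A+\mathcal{M}_{f_0w_h})^{-*}\phi] = c_i (I - P_\tau)[f_0 (A+\mathcal{M}_{f_0w_h})^{-*}\phi]$ on $Q_\tau^i$, and squaring, summing over $i$, and taking the square root yields
\[
\|(I - P_\tau)[f_0 w_h (A+\mathcal{M}_{f_0w_h})^{-*}\phi]\|_{L^2} \le \|w_h\|_{L^\infty}\,\|(I - P_\tau)[f_0 (A+\mathcal{M}_{f_0w_h})^{-*}\phi]\|_{L^2}.
\]

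The remaining estimates follow from the standard Bramble--Hilbert / Poincar\'e bound $\|v - P_\tau v\|_{L^2} \le C \tau^{2s} |v|_{H^{2s}}$, valid on shape-regular meshes for $v \in H^{2s}(\Omega)$ with $2s \in [0,1]$, applied to $v = u_\tau$ (which yields the factor $|u_{h\tau}|_{H^{2s}}$) and to $v = f_0(A+\mathcal{M}_{f_0w_h})^{-*}\phi$, for which I use the multiplication bound $\|f_0(A+\mathcal{M}_{f_0w_h})^{-*}\phi\|_{H^{2s}} \le \|\mathcal{M}_{f_0}\|_{H^{2s}\to H^{2s}}\|(A+\mathcal{M}_{f_0 w_h})^{-*}\|_{L^2 \to H^{2s}}\|\phi\|_{L^2}$. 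Cauchy--Schwarz on the remaining integral then combines the two $\tau^{2s}$-factors into the advertised $\tau^{4s}$, producing exactly the asserted bound.

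The main obstacles are two subtleties. First, recognizing that \cref{ass:grid_embedding} is exactly what is needed both to collapse the mixed product $(P_\tau u_\tau)(P_\tau w_h) - u_\tau w_h$ to $w_h (P_\tau u_\tau - u_\tau)$ and to factor $\|w_h\|_{L^\infty}$ out of the approximation estimate without incurring any extra Sobolev norm of the irregular function $w_h$. Second, the piecewise-constant $L^2$-approximation bound only gains the full $2s$ powers when $2s \le 1$, so the stated rate should be read as $\tau^{\min(4s,2)}$; it is sharp in the regime $s \in (0,1/2]$, while for $s > 1/2$ each approximation factor saturates at $\tau$.
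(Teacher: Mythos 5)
Your proposal follows essentially the same route as the paper's proof: the identical error equation using $P_\tau w_h = w_h$ from \cref{ass:grid_embedding}, the same Aubin--Nitsche duality, the same exploitation of the $L^2$-orthogonality of $u_\tau - P_\tau u_\tau$ to piecewise constants to gain a second factor of $\tau^{2s}$, and the same double application of the projection error estimate together with the $\mathcal{M}_{f_0}$ multiplication bound (the paper substitutes $\psi = f_0(A+\mathcal{M}_{f_0 w_h})^{-*}\phi$ and moves $P_\tau$ onto $\psi$, while you insert $I-P_\tau$ in front of the whole product and factor $w_h$ out cell-by-cell, which is an equivalent manipulation). Your closing caveat --- that the piecewise-constant approximation bound $\|v - P_\tau v\|_{L^2}\le C\tau^{2s}|v|_{H^{2s}}$ saturates at order one for $2s>1$, so the rate should read $\tau^{\min\{4s,2\}}$ --- is a legitimate observation that the paper's proof, which invokes the same estimate without restriction on $\mu=2s$, does not address.
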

\Margin{R2 16.}
\begin{proof}
	Let $u_{h\,\tau} = S_\tau(w_h)$, $u_h = S(w_h)$. We immediately have
	\[
	\|F_\tau(w_h) - F(w_h)\|_{Y}
	\le \|T\|_{L^2(\Omega)\to Y}\|u_{h\,\tau} - u_h\|_{L^2(\Omega)},
	\]
	To estimate $\|u_{h\,\tau} - u_h\|_{L^2(\Omega)}$ from above, we observe
	\[
	\begin{aligned}
	0 &= Au_{h\,\tau} + f_0 (P_\tau w_h)(P_\tau u_{h\,\tau})  -(Au_h + f_0 w_hu_h)\\
	&= A(u_{h\,\tau}-u_h) + f_0w_h (u_{h\,\tau} - u_h) - f_0 w_h u_{h\,\tau} + f_0 (P_\tau w_h)(P_\tau u_{h\,\tau})  \\
	&= (A +\mathcal{M}_{f_0w_h})(u_{h\,\tau}-u_h) + f_0 w_h(P_\tau u_{h\,\tau} - u_{h\,\tau}),
	\end{aligned}
	\]
	where we have used that $P_\tau w_h = w_h$ due to \cref{ass:grid_embedding}, which implies
	\[ u_{h\,\tau} -u_h
	= (A +\mathcal{M}_{f_0w_h})^{-1}\bigl(\mathcal{M}_{f_0w_h}(P_\tau u_{h\,\tau} - u_{h\,\tau})\bigr),
	\]
	where $A+\mathcal{M}_{f_0 w_h}$ is invertible by means of \cref{ass:approximation_relaxation} \ref{itm:pde_sol}.
	Using $P_\tau w_h = w_h$ again, we estimate
	\begin{align}
	\hspace{1em}&\hspace{-1em}\bigl\|(A +\mathcal{M}_{f_0w_h})^{-1}\mathcal{M}_{f_0w_h}(P_\tau  u_{h\,\tau} - u_{h\,\tau})\bigr\|_{L^2(\Omega)} \nonumber\\
	&=\sup_{\phi\in C^\infty(\Omega)\setminus\{0\}}
	\frac{\int_\Omega (P_\tau  u_{h\,\tau} - u_{h\,\tau}) \mathcal{M}_{f_0 w_h} (A+\mathcal{M}_{f_0 w_h})^{-*}\phi\,dx}{\|\phi\|_{L^2(\Omega)}}
	\nonumber\\
	&=\sup_{\psi\in C^\infty(\Omega)\setminus\{0\}}
	\frac{\int_\Omega (P_\tau w_h)(P_\tau  u_{h\,\tau} - u_{h\,\tau}) \psi\,dx}{\|(A+\mathcal{M}_{f_0w_h})^*\mathcal{M}_{f_0^{-1}} \psi\|_{L^2(\Omega)}}
	\nonumber\\
	&=\sup_{\psi\in C^\infty(\Omega)\setminus\{0\}}
	\frac{\int_\Omega (P_\tau w_h)(P_\tau  u_{h\,\tau} - u_{h\,\tau}) (\psi - P_\tau \psi)\,dx}{\|(A+\mathcal{M}_{f_0w_h})^*\mathcal{M}_{f_0^{-1}} \psi\|_{L^2(\Omega)}}
	\label{eq:nitsche}\\
	&\le \|\mathcal{M}_{f_0}\|_{H^{2s} \to H^{2s}} \|(A+\mathcal{M}_{f_0 w_h})^{-*}\|_{L^2(\Omega)\to H^{2s}(\Omega)}
	\|w_h\|_{L^\infty} C(\Omega,2s)^2 (\tau^{2s})^2 |u_{h\,\tau}|_{H^{2s}} 
	\nonumber,
	\end{align}
	where we have used that $(P_\tau w_h)(P_\tau u_{h\,\tau} - u_\tau)(P_\tau \psi) = 0$ for \eqref{eq:nitsche} and 
	\[ 
	\|g - P_\tau g\|_{L^2(\Omega)}\leq C(\Omega,\mu) \, \tau^\mu |g|_{H^\mu(\Omega)}
	\]
	for $g=u_{h\,\tau}$ and $g=\psi$ for some $C(\Omega,\mu) > 0$; see, e.g., \cite[Theorem 4.4.20]{BrennerScott:2008}.
\end{proof}

\paragraph{\eqref{eq:mcc_delta} relaxes \eqref{eq:inv_delta}}
\begin{proposition}\label{prp:relax_delta}
Let \cref{ass:approximation_relaxation} hold with some reflexive space $U$ satisfying $U{\hookrightarrow}L^p(\Omega)$  for some $1\leq p\leq\infty$ and $A:U\to U^*$ be an isomorphism, $f_0\in L^{p/(p-1)}(\Omega)$, $f_1\in U^*$.

Then \eqref{eq:mcc_delta} is a convex minimization problem that admits a minimizer. 

Let $(u,w) \in U\times \Wadh$ be feasible for \eqref{eq:inv_delta}.
Then $\big(u,w,uw\big) \in U\times \Wadh \times \Zad$ is feasible for \eqref{eq:mcc_delta}
and the objective values coincide.
\end{proposition}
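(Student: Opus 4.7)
The plan is to verify the three assertions in turn: convexity of \eqref{eq:mcc_delta}, existence of a minimizer via the direct method of calculus of variations, and the feasibility-plus-objective-match from \eqref{eq:inv_delta} under the substitution $z = uw$. This substitution is the key structural observation, as it linearizes the bilinear term in the state equation, after which all constraints become affine or convex by construction. Convexity itself is then by inspection: the objective $\|Tu-\ydel\|_Y$ is the composition of a norm with an affine map in $u$; the state equation $Au + f_0 z = f_1$ is affine in $(u,z)$; each of the four components of $\mcc(u,w,z)$ is affine in $(u,w,z)$; and both $\conv \Wadh$ and $\Zad$ are convex, the latter by \cref{ass:approximation_relaxation} \ref{itm:Zad_ass}.

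For existence of a minimizer I will invoke the direct method. Nonemptiness of the feasible set follows once the feasibility claim below is established, since \cref{ass:approximation_relaxation} \ref{itm:pde_sol} guarantees a solution $u$ of \eqref{eq:pde} for any $w \in \Wadh$, hence a feasible pair of \eqref{eq:inv_delta}. To show boundedness of the feasible set, I use that $w$ is uniformly bounded in $L^\infty$ by the constraints defining $\Wad$, that $z$ is uniformly bounded in $L^\infty$ by \cref{ass:approximation_relaxation} \ref{itm:Zad_ass}, and that $f_0 z \in L^{p/(p-1)}(\Omega) \hookrightarrow U^*$ by H\"older's inequality together with the embedding $U \hookrightarrow L^p(\Omega)$. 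Consequently $u = A^{-1}(f_1 - f_0 z)$ is uniformly bounded in $U$ by continuous invertibility of $A$. Weak sequential closedness of the feasible set follows from convexity combined with strong closedness of each individual constraint (affine-continuous state equation, affine-continuous McCormick inequalities, and closedness of $\conv \Wadh$ and $\Zad$). Weak lower semicontinuity of the objective is then automatic because $u \mapsto \|Tu - \ydel\|_Y$ is convex and continuous on $U$.

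For the feasibility claim, let $(u,w)$ be feasible for \eqref{eq:inv_delta} and set $z \coloneqq uw$. The state equation in \eqref{eq:mcc_delta} holds by construction. The four McCormick inequalities are equivalent to the a.e.\ nonnegativity of the products $(u-u_\ell)(w-w_\ell)$, $(u_u-u)(w_u-w)$, $(u_u-u)(w-w_\ell)$, $(u-u_\ell)(w_u-w)$, which holds by \cref{ass:approximation_relaxation} \ref{itm:pde_box_ass} together with the bounds defining $\Wad$; expanding each product and substituting $z = uw$ reproduces exactly $\mcc(u,w,z) \le 0$ a.e. Finally $w \in \Wadh \subset \conv \Wadh$, $z = uw \in \Zad$ by \cref{ass:approximation_relaxation} \ref{itm:Zad_ass}, and both objectives reduce to $\|Tu - \ydel\|_Y$. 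The main subtlety I anticipate is not in any single argument but in keeping the integrability chain $f_0 \in L^{p/(p-1)}$, $z \in L^\infty$, $U \hookrightarrow L^p$ consistent so that the direct-method argument closes cleanly in the chosen functional-analytic setting.
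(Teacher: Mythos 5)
Your proof is correct and follows essentially the same route as the paper's: convexity by inspection of the linearized constraints, existence via the direct method using boundedness of the feasible set and weak lower semicontinuity, and feasibility of $(u,w,uw)$ by expanding the four sign conditions $(u-u_\ell)(w-w_\ell)\ge 0$ etc.\ into the McCormick inequalities. Your version is somewhat more explicit than the paper's (in particular, deriving the $U$-bound from the $L^\infty$-bound on $z\in\Zad$ rather than from the bounds on $u$ and $w$), but the substance is the same.
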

\begin{proof}
\eqref{eq:mcc_delta} is convex because the objective of \eqref{eq:pde_tau} is 
convex, the sets $\conv \Wadh$ and $\Zad$ are convex, and all further constraints
are linear. The feasible set is bounded in $U$, due to \cref{ass:approximation_relaxation},
\ref{itm:pde_box_ass}, \ref{itm:Wadh_box_ass}, our assumption on $f_0$, $f_1$, and the continuous invertibility
of $A:U^*\to U$. It is non-empty since the feasible set of
\eqref{eq:inv_delta} is non-empty. 
Thus the existence of minimizers follows with the weak lower semi-continuity of the objective.
The final claim follows from \cref{ass:approximation_relaxation}
\ref{itm:pde_box_ass} and 
the bound constraints in $\Wad$,
which imply
\[
\mcc(u,w,uw)(x) \le 0
\text{ for a.e.\ } x \in\Omega.
\]
\end{proof}

\paragraph{\eqref{eq:mcc_delta_tau} relaxes \eqref{eq:inv_delta_tau}}
\begin{proposition}\label{prp:relax_tau}
Let \cref{ass:approximation_relaxation} hold with some reflexive space $U$ satisfying $U{\hookrightarrow}L^p(\Omega)$  for some $1\leq p\leq\infty$ and $A:U\to U^*$ be an isomorphism, $f_0\in L^{p/(p-1)}(\Omega)$, $f_1\in U^*$.

Then \eqref{eq:mcc_delta_tau} is a convex minimization problem that admits a minimizer. 

Let $(u,w) \in U\times \Wadh$ be feasible for \eqref{eq:inv_delta_tau}.
Then $\big(u,w,(P_\tau u)(P_\tau w)\big) \in U\times \Wadh \times \Zadtau$ is feasible for \eqref{eq:mcc_delta_tau}
and the objective values coincide.
\end{proposition}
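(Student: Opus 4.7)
My plan is to follow the proof of \cref{prp:relax_delta} essentially verbatim, adapting only the bookkeeping to the piecewise-constant, locally averaged structure of the inequalities in \eqref{eq:mcc_delta_tau}. The three tasks are: (i) convexity; (ii) existence of a minimizer via the direct method; and (iii) verifying that $\bigl(u,w,(P_\tau u)(P_\tau w)\bigr)$ is feasible for \eqref{eq:mcc_delta_tau} whenever $(u,w)$ is feasible for \eqref{eq:inv_delta_tau}, together with coincidence of objective values.

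For (i), the objective $u\mapsto\|Tu-\ydel\|_Y$ is convex as a composition of a norm with an affine map; the state constraint $Au+f_0z=f_1$ is linear in $(u,z)$; for each $i$, $\mcc_\tau^i(u,w,z)\le 0$ on $Q_\tau^i$ is affine in $(u,w,z)$, because $P_\tau$ is linear and $u_\ell^i,u_u^i,w_\ell^i,w_u^i$ are scalars; and $\conv\Wadh$ and $\Zadtau$ are convex by assumption. For (ii), I apply the direct method. Non-emptiness of the feasible set will follow from (iii) below together with the non-emptiness of the feasible set of \eqref{eq:inv_delta_tau}. Boundedness in $U\times L^\infty(\Omega)\times L^\infty(\Omega)$ comes from \cref{ass:approximation_relaxation} \ref{itm:Wadh_box_ass} and \ref{itm:Zadtau_ass} for $w$ and $z$, and from $u=A^{-1}(f_1-f_0z)$ together with $f_0\in L^{p/(p-1)}(\Omega)$, $f_1\in U^*$, and the continuous invertibility of $A:U\to U^*$ for $u$. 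Weak lower semicontinuity of the objective and weak closedness of the closed, convex feasible set then deliver a minimizer.

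For (iii), let $(u,w)\in U\times\Wadh$ be feasible for \eqref{eq:inv_delta_tau} and set $z\coloneqq(P_\tau u)(P_\tau w)$. Substituting into \eqref{eq:pde_tau} yields $Au+f_0 z=f_1$, and $z\in\Zadtau$ follows from \cref{ass:approximation_relaxation} \ref{itm:Zadtau_ass}. Fixing $i\in\{1,\ldots,N_\tau\}$ and writing $\bar u^i,\bar w^i$ for the constant values of $P_\tau u, P_\tau w$ on $Q_\tau^i$, we have $\bar u^i\in[u_\ell^i,u_u^i]$ by \ref{itm:pde_tau_box_ass}, $\bar w^i\in[w_\ell^i,w_u^i]$ by \ref{itm:Wadh_box_ass}, and $z_i=\bar u^i\bar w^i$. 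The four components of $\mcc_\tau^i(u,w,z)\le 0$ then rearrange on $Q_\tau^i$ to the nonnegative-product inequalities
\[
(\bar u^i-u_\ell^i)(\bar w^i-w_\ell^i)\ge 0,\quad (u_u^i-\bar u^i)(w_u^i-\bar w^i)\ge 0,
\]
\[
(u_u^i-\bar u^i)(\bar w^i-w_\ell^i)\ge 0,\quad (\bar u^i-u_\ell^i)(w_u^i-\bar w^i)\ge 0,
\]
each of which is immediate from the bound inclusions above. Coincidence of objective values is trivial since the objective only depends on $u$. The proof is thus mostly routine; the only mildly delicate point is recognizing that the cell-constant structure of $P_\tau u, P_\tau w$ and of $z\in\Zadtau$ collapses each McCormick a.e.\ inequality on $Q_\tau^i$ to a single scalar bilinear check, which is exactly what makes \eqref{eq:mcc_delta_tau} finitely constrained and linear in $(u,w,z)$ while still relaxing \eqref{eq:inv_delta_tau}.
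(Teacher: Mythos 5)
Your proposal is correct and follows exactly the route the paper takes: the paper's own proof of \cref{prp:relax_tau} is a one-line reference stating that the argument parallels that of \cref{prp:relax_delta}, which is precisely the adaptation you carry out. Your explicit verification of the four McCormick inequalities via the nonnegative-product factorizations on each cell $Q_\tau^i$ is a welcome elaboration of a step the paper leaves implicit, but it is not a different method.
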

\begin{proof}
The claim follows with an argument that parallels the proof of \cref{prp:relax_delta}.
\end{proof}

\section{Approximate Lower Bounds on \eqref{eq:inv} and \eqref{eq:inv_delta}} 
\label{sec:approximate_lower_bounds}
In this section, we briefly prove that the solutions to \eqref{eq:mcc_delta} and 
\eqref{eq:mcc_delta_tau} directly provide approximate lower bounds for the problems
\eqref{eq:inv} and \eqref{eq:inv_delta}, that is, true lower bounds after
subtracting an error due to the increased feasible set in \eqref{eq:mcc_delta}
and \eqref{eq:mcc_delta_tau} and the local averaging in \eqref{eq:mcc_delta_tau}.
We note that by optimization-based bound-tightening procedures 
\cite{leyffer2025mccormick}, we can shrink the $L^\infty$-bounds
in \cref{ass:approximation_relaxation} \ref{itm:pde_box_ass}
and \ref{itm:pde_tau_box_ass}. While this increases the optimal objective
\eqref{eq:mcc_delta} and \eqref{eq:mcc_delta_tau} and thus tightens the bound,
it does not decrease the subtracted approximation error since the latter
requires that an approximation of the optimal solution is in the feasible set of
\eqref{eq:mcc_delta} and \eqref{eq:mcc_delta_tau}, which might be cut off
by the bound-tightening procedure.

After the estimates, we proceed with an error balancing argument.

\paragraph{\eqref{eq:mcc_delta} yields approximate lower bound on \eqref{eq:inv}}

\begin{proposition}\label{prp:I_approx_lb_McCdelta}
Let $\delta$, $h(\delta) > 0$.
Let \cref{ass:approximation_relaxation} hold.
Let	$m_{\eqref{eq:inv}}$ and $m_{\eqref{eq:mcc_delta}}$ be the optimal
objective values for \eqref{eq:inv} and \eqref{eq:mcc_delta}
respectively. Then
\[ 
m_{\eqref{eq:inv}}
\ge m_{\eqref{eq:mcc_delta}} 
 - 
 \big(\inf\{ \|Tu - Tu_\delta\|_{Y} : (u,w) \in \argmin \eqref{eq:inv},
  u_\delta \text{ solves } \eqref{eq:pde} \text{ for } P_{h(\delta)} w\}
  + \delta\big),
\]	
Assuming further that the solution operator 
$w\mapsto(A+\mathcal{M}_{f_0 w})^{-1}f_1$ of 
\eqref{eq:pde} is Lipschitz as a mapping
$L^p(\Omega) \to U$, $p \in [1,\infty)$,
with Lipschitz constant $L_{\eqref{eq:pde}}$ and
$T$ has operator norm
$\|T\|_{U\to Y}$, we obtain
\[
m_{\eqref{eq:inv}}
\ge m_{\eqref{eq:mcc_delta}} 
- (L_{\eqref{eq:pde}}\|T\|_{U\to Y}
\inf\{\|w - P_{h(\delta)}w\|_{L^p(\Omega)} : (u,w) \in \argmin\eqref{eq:inv}\}
+ \delta)
\]
Assuming further that $w \in \BV(\Omega)$ for some
$(u,w) \in \argmin \eqref{eq:inv}$, we obtain
\[ m_{\eqref{eq:inv}}
\ge m_{\eqref{eq:mcc_delta}} 
- \big(L_{\eqref{eq:pde}}\|T\|_{U\to Y}
\|w_u - w_\ell\|_{L^\infty(\Omega)}^{\frac{p-1}{p}}\sqrt{d}^{\frac{1}{p}}
\inf\big\{|w|_{\TV}^{\frac{1}{p}} : (u,w) \in \argmin\eqref{eq:inv}\big\}h(\delta)^{\frac{1}{p}}
+ \delta\big).
\]
\end{proposition}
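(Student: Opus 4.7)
The plan is to exhibit a feasible point for \eqref{eq:mcc_delta} built from an optimal solution to \eqref{eq:inv}, bound the \eqref{eq:mcc_delta}-objective value at this point via the triangle inequality, and convert the exact-vs-noisy data discrepancy using \eqref{eq:delta}. I would derive the three displayed inequalities in sequence, each refining the previous one by substituting a sharper bound for $\|Tu^\star - Tu_\delta\|_Y$.

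For the first inequality, I would take any $(u^\star, w^\star) \in \argmin \eqref{eq:inv}$, let $u_\delta$ denote the solution of \eqref{eq:pde} for the projected coefficient $P_{h(\delta)} w^\star$ (which lies in $\Wadh$ by \cref{ass:approximation_relaxation} \ref{itm:Wad_box_proj}), and apply \cref{prp:relax_delta} to the feasible pair $(u_\delta, P_{h(\delta)} w^\star)$ of \eqref{eq:inv_delta} to conclude that the triple $(u_\delta, P_{h(\delta)} w^\star, u_\delta P_{h(\delta)} w^\star)$ is feasible for \eqref{eq:mcc_delta}. Plugging this point in as an upper bound on $m_{\eqref{eq:mcc_delta}}$ and splitting by two triangle inequalities gives
\[
m_{\eqref{eq:mcc_delta}} \le \|Tu_\delta - \ydel\|_Y \le \|Tu_\delta - Tu^\star\|_Y + \|Tu^\star - y^\dagger\|_Y + \|y^\dagger - \ydel\|_Y \le \|Tu_\delta - Tu^\star\|_Y + m_{\eqref{eq:inv}} + \delta,
\]
where the last step uses optimality of $(u^\star, w^\star)$ for \eqref{eq:inv} together with \eqref{eq:delta}. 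Rearranging and noting that this inequality is valid for every admissible choice of $(u^\star, w^\star)$, so that the infimum over such choices may be taken on the right-hand side, yields the first claim.

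For the second inequality, I would insert the postulated Lipschitz estimate
\[
\|Tu_\delta - Tu^\star\|_Y \le \|T\|_{U\to Y}\,\|S(P_{h(\delta)} w^\star) - S(w^\star)\|_U \le L_{\eqref{eq:pde}}\,\|T\|_{U\to Y}\,\|P_{h(\delta)} w^\star - w^\star\|_{L^p(\Omega)}
\]
into the first inequality and again pass to the infimum. For the third inequality, I would invoke the $\BV$-based approximation bound \eqref{eq:approx_w} already established en route to \cref{prp:reg_dis_cor}, namely $\|w - P_{V_h} w\|_{L^p(\Omega)}^p \le \sqrt{d}\, h\, |w|_{\TV}\, \text{diam}(W)^{p-1}$, estimate $\text{diam}(W) \le \|w_u - w_\ell\|_{L^\infty(\Omega)}$, extract $p$-th roots, and feed the result into the second inequality.

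I do not expect any genuine analytical obstacle: the argument is essentially careful bookkeeping, and the only subtlety is the exact-versus-noisy data mismatch between \eqref{eq:inv} and \eqref{eq:mcc_delta}, which is absorbed cleanly by the triangle inequality and \eqref{eq:delta}. Feasibility of the constructed triple is handed to us by \cref{prp:relax_delta}, and the two quantitative refinements rest on estimates already present in the paper.
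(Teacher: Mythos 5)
Your proposal is correct and follows essentially the same route as the paper's proof: construct the feasible triple $(u_\delta, P_{h(\delta)}w, u_\delta P_{h(\delta)}w)$ for \eqref{eq:mcc_delta}, bound $m_{\eqref{eq:mcc_delta}}$ by its objective value, split via the triangle inequality and \eqref{eq:delta}, and then refine with the Lipschitz estimate and \eqref{eq:approx_w}. The only cosmetic difference is that you justify feasibility by invoking \cref{prp:relax_delta} while the paper cites \cref{ass:approximation_relaxation} \ref{itm:pde_box_ass} and \ref{itm:Zad_ass} directly; these amount to the same thing.
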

\begin{proof}
Let $(u,w)$ solve \eqref{eq:inv} and $u_\delta$ solve \eqref{eq:pde} for
$w_\delta \coloneqq P_{h(\delta)} w$. Then $(u_\delta, w_\delta, u_\delta w_\delta)$ is feasible for \eqref{eq:mcc_delta} by virtue of
\cref{ass:approximation_relaxation} \ref{itm:pde_box_ass} and
\ref{itm:Zad_ass}, implying
\[ \|Tu_\delta - \ydel\|_{Y} \ge m_{\eqref{eq:mcc_delta}}. \]
We have
\[
\|T u_\delta - \ydel\|_{Y} - \|T u - y^\dagger\|_{Y}
\le \|T u - T u_\delta\|_{Y} + \delta,
\]
which yields the first claim. The second claim then follows from
\[ \|Tu - Tu_\delta\|_{Y} \le L_{\eqref{eq:pde}} \|T\|_{U\to Y}\|w - P_{h(\delta)}w\|_{L^p(\Omega)}
\]
and the third from \eqref{eq:approx_w}.
\end{proof}

\paragraph{\eqref{eq:mcc_delta_tau} yields approximate lower bound on \eqref{eq:inv_delta}}

\begin{proposition}\label{prp:Idelta_approx_lb_McCdeltatau}
Let $\delta$, $h(\delta) > 0$, $\tau > 0$. Let \cref{ass:approximation_relaxation} hold.
Let	$m_{\eqref{eq:inv_delta}}$ and $m_{\eqref{eq:mcc_delta_tau}}$ be the optimal
objective values for \eqref{eq:inv_delta} and \eqref{eq:mcc_delta_tau}
respectively. Then
\[ 
m_{\eqref{eq:inv_delta}}
\ge m_{\eqref{eq:mcc_delta_tau}} 
- \inf\{ \|Tu - Tu_{\tau}\|_{Y} : (u,w) \in \argmin \eqref{eq:inv_delta},
u_\tau \text{ solves } \eqref{eq:pde_tau} \text{ for } w\}.
\]	
\end{proposition}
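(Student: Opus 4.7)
The plan is to mirror the strategy of \cref{prp:I_approx_lb_McCdelta}, exploiting the relaxation result \cref{prp:relax_tau} together with the surjectivity statement from \cref{prp:approx_tau}, and to bridge the remaining gap with a triangle inequality in $Y$. Compared to \cref{prp:I_approx_lb_McCdelta}, no $\delta$-term arises because both \eqref{eq:inv_delta} and \eqref{eq:mcc_delta_tau} use the same noisy datum $\ydel$; the only source of error is the replacement of the state $u$ solving \eqref{eq:pde} by the state $u_\tau$ solving the locally-averaged equation \eqref{eq:pde_tau}.

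First, I would fix an arbitrary minimizer $(u,w)\in\argmin\eqref{eq:inv_delta}$ and let $u_\tau\in U$ denote the (unique, by \cref{ass:approximation_relaxation}\ref{itm:pde_sol}) solution of \eqref{eq:pde_tau} with the same $w\in\Wadh$. By \cref{prp:approx_tau}, the pair $(u_\tau,w)$ is feasible for \eqref{eq:inv_delta_tau}. Applying \cref{prp:relax_tau} to this pair, the triple
\[
\bigl(u_\tau,\,w,\,(P_\tau u_\tau)(P_\tau w)\bigr)\in U\times \Wadh\times\Zadtau
\]
is feasible for \eqref{eq:mcc_delta_tau} with objective value $\|Tu_\tau-\ydel\|_Y$. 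Since $m_{\eqref{eq:mcc_delta_tau}}$ is by definition the minimum over all feasible triples, this immediately gives
\[
m_{\eqref{eq:mcc_delta_tau}}\ \le\ \|Tu_\tau-\ydel\|_Y.
\]

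Second, I would insert $Tu$ via the triangle inequality in $Y$:
\[
\|Tu_\tau-\ydel\|_Y\ \le\ \|Tu-\ydel\|_Y+\|Tu-Tu_\tau\|_Y\ =\ m_{\eqref{eq:inv_delta}}+\|Tu-Tu_\tau\|_Y,
\]
where the equality uses that $(u,w)$ is a minimizer of \eqref{eq:inv_delta}. Rearranging yields
\[
m_{\eqref{eq:inv_delta}}\ \ge\ m_{\eqref{eq:mcc_delta_tau}}-\|Tu-Tu_\tau\|_Y.
\]
Since this holds for every $(u,w)\in\argmin\eqref{eq:inv_delta}$ and the associated $u_\tau$, taking the infimum of the right-hand side over all such pairs gives the claimed inequality.

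There is no genuine obstacle: the only things to check are that the construction $(u,w)\mapsto(u_\tau,w,(P_\tau u_\tau)(P_\tau w))$ actually lands in the feasible set of \eqref{eq:mcc_delta_tau}, which is exactly the content of \cref{prp:relax_tau} combined with the well-definedness ensured by \cref{ass:approximation_relaxation}\ref{itm:pde_sol} and \ref{itm:Zadtau_ass}, and that the triangle inequality is applied on the correct side. Because the noise enters both problems identically through $\ydel$, no $\delta$ term appears, in contrast to \cref{prp:I_approx_lb_McCdelta}.
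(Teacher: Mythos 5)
Your proposal is correct and follows essentially the same route as the paper: exhibit $(u_\tau, w, (P_\tau u_\tau)(P_\tau w))$ as a feasible point of \eqref{eq:mcc_delta_tau}, bound $m_{\eqref{eq:mcc_delta_tau}}$ by its objective value, and conclude with the triangle inequality before taking the infimum over minimizers. The only cosmetic difference is that you justify feasibility by invoking \cref{prp:relax_tau} (via the surjectivity in \cref{prp:approx_tau}), whereas the paper cites the relevant items of \cref{ass:approximation_relaxation} directly; the content is the same.
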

\begin{proof}
Let $(u,w)$ solve \eqref{eq:inv_delta} and $u_\tau$ solve \eqref{eq:pde_tau} for
$w$. Then $(u_\tau, w, (P_\tau u_\tau) (P_\tau w))$ is feasible for
\eqref{eq:mcc_delta_tau} by virtue of \cref{ass:approximation_relaxation}
\ref{itm:pde_box_ass} and \ref{itm:Zadtau_ass}, implying
\[ \|Tu_\tau - \ydel\|_{Y} \ge m_{\eqref{eq:mcc_delta_tau}}. \]
We have
\[
\|Tu_\tau - \ydel\|_{Y} - \|Tu - \ydel\|_{Y}
\le \|T u_\tau - T u\|_{Y},
\]
which yields the claim.
\end{proof}

\paragraph{\eqref{eq:mcc_delta_tau} yields approximate lower bound on \eqref{eq:inv}}

\begin{proposition}\label{prp:I_approx_lb_McCdeltatau}
Let $\delta$, $h(\delta) > 0$, $\tau > 0$.
Let \cref{ass:approximation_relaxation} hold
Let	$m_{\eqref{eq:inv}}$ and $m_{\eqref{eq:mcc_delta_tau}}$ be the optimal
objective values for \eqref{eq:inv} and \eqref{eq:mcc_delta_tau}
respectively. Then
\[ 
m_{\eqref{eq:inv}}
\ge m_{\eqref{eq:mcc_delta_tau}} 
- 
\big(\inf\{ \|Tu - Tu_{\delta\tau}\|_{Y} : (u,w) \in \argmin \eqref{eq:inv},
u_{\delta\tau} \text{ solves } \eqref{eq:pde_tau} \text{ for } P_{h(\delta)} w\}
+ \delta\big).
\]	
\end{proposition}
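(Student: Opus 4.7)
The plan is to combine the strategies of \cref{prp:I_approx_lb_McCdelta} and \cref{prp:Idelta_approx_lb_McCdeltatau}: I construct an admissible triple for \eqref{eq:mcc_delta_tau} starting from an optimal pair for \eqref{eq:inv} and then close the loop via the triangle inequality together with the noise bound \eqref{eq:delta}. Concretely, given $(u,w) \in \argmin \eqref{eq:inv}$, I would set $w_\delta \coloneqq P_{h(\delta)} w$, which lies in $\Wadh$ by \cref{ass:approximation_relaxation} \ref{itm:Wad_box_proj}, and then define $u_{\delta\tau}$ as the unique solution of \eqref{eq:pde_tau} for coefficient $w_\delta$ (existence granted by \cref{ass:approximation_relaxation} \ref{itm:pde_sol}). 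Finally, set $z_{\delta\tau} \coloneqq (P_\tau u_{\delta\tau})(P_\tau w_\delta)$.

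Next, I would check that $(u_{\delta\tau}, w_\delta, z_{\delta\tau})$ is feasible for \eqref{eq:mcc_delta_tau}. The equality $A u_{\delta\tau} + f_0 z_{\delta\tau} = f_1$ holds by the definition of $u_{\delta\tau}$ as a solution of \eqref{eq:pde_tau}; the inclusions $w_\delta \in \conv \Wadh$ and $z_{\delta\tau} \in \Zadtau$ follow respectively from \cref{ass:approximation_relaxation} \ref{itm:Wad_box_proj} and \ref{itm:Zadtau_ass}. The McCormick inequalities $\mcc_\tau^i(u_{\delta\tau}, w_\delta, z_{\delta\tau}) \le 0$ reduce pointwise on each $Q_\tau^i$ to the standard fact that the bilinear product of two quantities lying in known boxes sits inside its McCormick envelope; the required box bounds on $P_\tau u_{\delta\tau}$ and $P_\tau w_\delta$ are supplied by \cref{ass:approximation_relaxation} \ref{itm:pde_tau_box_ass} and \ref{itm:Wadh_box_ass}. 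Consequently $\|T u_{\delta\tau} - \ydel\|_Y \ge m_{\eqref{eq:mcc_delta_tau}}$.

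To conclude, I would chain triangle inequalities and the noise bound to relate this to $m_{\eqref{eq:inv}} = \|T u - y^\dagger\|_Y$:
\[
m_{\eqref{eq:mcc_delta_tau}}
\le \|T u_{\delta\tau} - \ydel\|_Y
\le \|T u_{\delta\tau} - T u\|_Y + \|T u - y^\dagger\|_Y + \|y^\dagger - \ydel\|_Y
\le \|T u_{\delta\tau} - T u\|_Y + m_{\eqref{eq:inv}} + \delta,
\]
where the last step uses \eqref{eq:delta}. Rearranging and taking the infimum over all admissible choices of $(u,w) \in \argmin \eqref{eq:inv}$ (which induces $u_{\delta\tau}$ via the prescription above) yields the asserted inequality.

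I do not anticipate any essential obstacle in this argument: it is really a bookkeeping exercise merging the discretization step $w \mapsto P_{h(\delta)} w$ (as in \cref{prp:I_approx_lb_McCdelta}) with the local-averaging step (as in \cref{prp:Idelta_approx_lb_McCdeltatau}). The only point that requires a bit of care is that both the discretization-level assumption \cref{ass:approximation_relaxation} \ref{itm:Wad_box_proj} and the local-averaging-level assumptions \ref{itm:pde_tau_box_ass}, \ref{itm:Wadh_box_ass}, \ref{itm:Zadtau_ass} are invoked in sequence on the same constructed triple, which is accommodated by the fact that all of them are already included in a single unified assumption block.
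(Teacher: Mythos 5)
Your proof is correct and follows essentially the same route as the paper's: construct the feasible triple $(u_{\delta\tau}, P_{h(\delta)}w, (P_\tau u_{\delta\tau})(P_\tau P_{h(\delta)}w))$ for \eqref{eq:mcc_delta_tau} from a minimizer of \eqref{eq:inv}, then conclude via the triangle inequality and the noise bound \eqref{eq:delta} exactly as in \cref{prp:I_approx_lb_McCdelta}. You have merely spelled out the feasibility checks and the final inequality chain that the paper leaves implicit (and you correctly use \eqref{eq:pde_tau} where the paper's proof has an apparent typo referring to \eqref{eq:pde}).
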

\begin{proof}
	Let $(u,w)$ solve \eqref{eq:inv} and $u_{\delta\tau}$ solve \eqref{eq:pde} for
	$w_\delta \coloneqq P_{h(\delta)} w$. Then $(u_{\delta\tau}, w_\delta, (P_\tau u_{\delta\tau})(P_\tau w_\delta))$ is feasible for \eqref{eq:mcc_delta_tau} by virtue of
	\cref{ass:approximation_relaxation} \ref{itm:pde_tau_box_ass},
	\ref{itm:Zad_ass}, and \ref{itm:Zadtau_ass}, implying
	\[ \|Tu_{\delta\tau} - \ydel\|_{Y} \ge m_{\eqref{eq:mcc_delta_tau}} \]
	so that the claim follows as for \cref{prp:I_approx_lb_McCdelta}.
\end{proof}

\paragraph{Error-balanced approximate solutions to \eqref{eq:inv}}
We are ultimately interested in a balanced error estimate of the form
\[ \|w^\dagger - \hat{w}_{\delta\tau}\|_{L^2(\Omega)} = O(\delta^t), \]
where $(u^\dagger, w^\dagger)$ is optimal for \eqref{eq:inv} and $(\hat{u}_{\delta\tau},\hat{w}_{\delta\tau})$ is
feasible for \eqref{eq:inv_delta_tau} and $\varepsilon$-optimal in terms of the objective, that is,
\begin{equation}\label{eq:epsilon} 
\varepsilon \coloneqq \|T\hat{u}_{\delta\tau} - \ydel\|_{Y} - \ell_{\eqref{eq:inv_delta_tau}}, 
\end{equation}
where $\ell_{\eqref{eq:inv_delta_tau}}$ is a computed lower bound of $\eqref{eq:inv_delta_tau}$,
$\ell_{\eqref{eq:inv_delta_tau}} \le m_{\eqref{eq:inv_delta_tau}}$. Assuming, we can solve the convex problem
\eqref{eq:mcc_delta_tau} to global optimality, we can have $m_{\eqref{eq:mcc_delta_tau}} \le \ell_{\eqref{eq:inv_delta_tau}}$
Thus $\varepsilon$ is a quantity that we can prescribe to a global optimization
algorithm like spatial branch-and-bound that tightens $\ell_{\eqref{eq:inv_delta_tau}}$ by successively restricting
\eqref{eq:mcc_delta_tau}. In addition, we assume that the noise level $\delta$ is given or can at least
be estimated. Thus, the goal is to determine the mesh sizes $h$, $\tau$, and $\varepsilon$ so that we can
estimate and balance the different error terms contributing to $\|w^\dagger - \hat{w}_{\delta\tau}\|_{L^2(\Omega)}$
as $\delta^t$ for some computable $t > 0$ that ideally is as large as possible.

\begin{theorem}\label{thm:balancing}
Let \cref{ass:approximation_relaxation,ass:grid_embedding} and $s\in(0,1]$ hold. 
Let $A:U\subseteq H^{2s}(\Omega)\to L^2(\Omega)$ 
and $T:L^2(\Omega)\to Y$ be  isomorphisms, 
$A:L^2(\Omega)\to H^{-2s}(\Omega)$ bounded,
$f_0, 1/f_0\,\in H^{2s}(\Omega)\cap L^\infty(\Omega)$, $f_1\in L^2(\Omega)$, and let  $(w^\dagger,u^\dagger)\in \BV(\Omega)\times H^{2s}(\Omega)\cap L^\infty(\Omega)$ with $1/S(P_{\Wadh} w^\dagger)\in L^\infty(\Omega)$ for all $h>0$
be the exact solution to \eqref{eq:inv}, that is, $Tu^\dagger=y^\dagger$, $w^\dagger\in\Wad$.
and denote by $(\hat{w}_{\delta\tau},\hat{u}_{\delta\tau})$ an approximate  solution to \eqref{eq:inv_delta} in the sense of \eqref{eq:epsilon} with $\hat{u}_{\delta\tau}=S(\hat{w}_{\delta\tau})$.

If we choose 
\begin{equation}\label{eq:choice_balancing}
\varepsilon=O(\delta),\quad
\tau=O\bigl(\min\{\delta^{1/(4s^2)},h(\delta)\}\bigr),\quad
h \sim \delta^{\max\{\frac{2}{1 + 4s},\frac{2d}{3d+4sd}\}}
\end{equation}
and have $\sup_{h\,\tau} \|(A+\mathcal{M}_{f_0 w_h})^{-*}\|_{L^2(\Omega)\to H^{2s}(\Omega)} < \infty$,
$\sup_{h\,\tau} \|S_\tau(w_h)\|_{H^{2s}(\Omega)} < \infty$, we obtain the balanced convergence rate
\[ \|w^\dagger - \hat{w}_{\delta\tau}\|_{L^2(\Omega)}
= \left\{
\begin{matrix}
O\left(\delta^{\frac{1}{1+4s}}\right)
& \text{ if } 3/2 + 2s/d - 2s > 1/2, \\
O\left(\delta^{\frac{3d+4s - 4sd}{3d + 4s}}\right)
& \text{ if } 3/2 + 2s/d - 2s \le 1/2 \\
\end{matrix}
\right.
\]
as $\delta \to 0$.
\end{theorem}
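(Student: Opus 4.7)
The plan is to follow the template of Proposition \ref{prp:reg_dis_cor}, adding two new error sources beyond noise and coefficient discretization: the local-averaging error $F_\tau - F$ and the optimization gap $\varepsilon$. First I would split the error using the projection,
\[
\|w^\dagger - \hat{w}_{\delta\tau}\|_{L^2(\Omega)} \le \|w^\dagger - P_{\Wadh} w^\dagger\|_{L^2(\Omega)} + \|P_{\Wadh} w^\dagger - \hat{w}_{\delta\tau}\|_{L^2(\Omega)},
\]
bound the first summand by $O(h^{1/2})$ via \eqref{eq:approx_w} with $p=2$ and $w^\dagger \in \BV(\Omega)$, and bound the second via the stability estimate \eqref{eq:kappa_for_inv} with $\kappa_h \sim h^{-2s}$, which was already established in the proof of Proposition \ref{prp:reg_dis_cor}. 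This reduces matters to estimating $\|F(P_{\Wadh} w^\dagger) - F(\hat{w}_{\delta\tau})\|_Y$.

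Since $\hat{u}_{\delta\tau} = S(\hat{w}_{\delta\tau})$, the identity $F(\hat{w}_{\delta\tau}) = T\hat{u}_{\delta\tau}$ combined with \eqref{eq:delta} and $F(w^\dagger) = y^\dagger$ gives
\[
\|F(P_{\Wadh} w^\dagger) - F(\hat{w}_{\delta\tau})\|_Y \le \|F(P_{\Wadh} w^\dagger) - F(w^\dagger)\|_Y + \delta + \|T\hat{u}_{\delta\tau} - \ydel\|_Y.
\]
The first term is the $O(h^{\ell(s)})$ contribution controlled by the Aubin--Nitsche argument \eqref{eq:AubinNitsche}. For the third term I would use the definition \eqref{eq:epsilon} and the valid lower-bound property $\ell_{\eqref{eq:inv_delta_tau}} \le m_{\eqref{eq:inv_delta_tau}}$ to obtain $\|T\hat{u}_{\delta\tau} - \ydel\|_Y \le m_{\eqref{eq:inv_delta_tau}} + \varepsilon$, and then bound $m_{\eqref{eq:inv_delta_tau}}$ from above by the objective of \eqref{eq:inv_delta_tau} at the feasible pair $(S_\tau(P_{\Wadh} w^\dagger), P_{\Wadh} w^\dagger)$. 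Inserting $F(P_{\Wadh}w^\dagger)$ and $y^\dagger$ in a second triangle inequality and invoking Proposition \ref{prp:htau_h_upper_bound} (whose uniform prefactor hypotheses are exactly the $\sup_{h,\tau}$ conditions listed in the statement) yields $m_{\eqref{eq:inv_delta_tau}} \lesssim \tau^{4s} + h^{\ell(s)} + \delta$.

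Combining the four contributions produces the master estimate
\[
\|w^\dagger - \hat{w}_{\delta\tau}\|_{L^2(\Omega)} \lesssim h^{1/2} + h^{\ell(s)-2s} + h^{-2s}\delta + h^{-2s}\tau^{4s} + h^{-2s}\varepsilon,
\]
after which a routine balancing closes the proof. The prescribed $h(\delta)$ in \eqref{eq:choice_balancing} is exactly the one obtained by equating $h^{\min\{1/2,\ell(s)-2s\}}$ with $h^{-2s}\delta$, recovering the rates of Proposition \ref{prp:reg_dis_cor}; the choice $\varepsilon = O(\delta)$ makes the optimization-gap term behave like the noise term; and the choice $\tau = O(\min\{\delta^{1/(4s^2)}, h(\delta)\})$ (the latter for applicability of Assumption \ref{ass:grid_embedding} and Proposition \ref{prp:htau_h_upper_bound}) makes the averaging error $h^{-2s}\tau^{4s}$ subdominant.

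The main obstacle I expect is the careful verification that the $\tau$-exponent $1/(4s^2)$ is sufficient: one must check that $h^{-2s}\tau^{4s}$ remains dominated by $h^{\min\{1/2,\ell(s)-2s\}}$ across the full parameter regime $s \in (0,1]$ and $d \in \N$, which becomes borderline as $s \to 1$. A secondary subtlety is justifying the uniform bounds $\sup_{h,\tau}\|(A + \mathcal{M}_{f_0 w_h})^{-*}\|_{L^2 \to H^{2s}}$ and $\sup_{h,\tau}\|S_\tau(w_h)\|_{H^{2s}}$ at $w_h = P_{\Wadh} w^\dagger$, since $w^\dagger$ only has $\BV$ regularity; this is why the theorem lifts these to hypotheses rather than deriving them. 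Once these are in hand, the error balancing itself is routine.
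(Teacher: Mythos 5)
Your proposal is correct and follows essentially the same route as the paper: the paper simply packages your projection-plus-stability decomposition and the comparison of the residual at $\hat{w}_{\delta\tau}$ against the feasible point built from $P_{\Wadh}w^\dagger$ into the abstract \cref{prp:regdis_tau}, then inserts the $O(h^{\ell(s)})$ bound from \eqref{eq:AubinNitsche}, the $\tau$-averaging bound from \cref{prp:htau_h_upper_bound}, and $\kappa_h\sim h^{-2s}$ before balancing exactly as you do. Your worry about the exponent $1/(4s^2)$ resolves immediately, since $\tau\lesssim\delta^{1/(4s^2)}$ gives $\tau^{4s}\lesssim\delta^{1/s}\le\delta$ for $s\in(0,1]$, so the averaging term is absorbed into the $h^{-2s}\delta$ term.
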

\begin{proof}
\cref{prp:regdis_tau} yields the estimate
\[
\|w^\dagger - \hat{w}_{\delta\tau}\|_{L^2(\Omega)} 
\leq C\Bigl(h^{1/2}+ h^{-k(s)}\bigl(
h^{\ell(s)}+\tau^{k(s)^2}+\delta+\varepsilon\bigr)\Bigr)
\] 
with $\ell(s)=3/2 + 2s/d$, $k(s)=2s$ and some $C > 0$, where the $\tau^{k(s)^2}$-estimate is due to
\cref{prp:htau_h_upper_bound}. As in \cref{prp:reg_dis_cor}, we seek for balanced choices of $h$
(and $\tau$ and $\varepsilon$) for a given noise level $\delta > 0$.
Given such a noise level $\delta>0$, the optimal balancing depends on whether $O(h^{1/2})$-term
or the $O(h^{\ell(s) - k(s)})$-term dominates and on the restriction $\tau \le h$ since the $\tau$-grid
is embedded into the $h$-grid.

Clearly, we can easily balance
$\varepsilon$ and $\tau^{k(s)^2}$ with $\delta$ by choosing
\[ \varepsilon(\delta) = O(\delta)
   \quad\text{and}\quad
   \tau = O(\min\{\delta^{1/(4s^2)},h(\delta)\}),
\]
where we have explicitly taken care of the requirement
$\tau \le h(\delta)$ due to \cref{ass:grid_embedding}.
We determine $h(\delta)$ next.

Again, we combine the $O(h^{1/2})$-term and the $O(h^{\ell(s) - k(s)})$-term into one
$O(h^{\min\{\frac{1}{2},\ell(s) - k(s)\}})$-term.
It thus remains to balance this with the $O(h^{-k(s)}\delta)$-term.
This is the situation from the proof of \cref{prp:reg_dis_cor}, yielding the choice
\[ h(\delta) \sim \delta^{\max\big\{\frac{2}{1 + 4s},\frac{2d}{3d+4sd}\big\}}. \]
Then, the terms $O(h(\delta)^{\min\{\frac{1}{2},\ell(s) - k(s)\}})$, 
$O(h(\delta)^{-k(s)}\varepsilon)$,
and $O(h(\delta)^{-k(s)}\delta)$ are in balance and dominating, which yields the claimed
convergence rate (again similar to \cref{prp:reg_dis_cor}).
\end{proof}

\section{Computational Experiment}\label{sec:compexp}
For our computational experiment, we consider the problem
\eqref{eq:inv}, where $Au + f_0 u w = f_1$ is realized
by the weak formulation
\[ \int_0^1 \partial_x u \partial_x v\,\mathrm{d}x
   + \int_0^1 f_0 u w v\,\mathrm{d}x
   = \int_0^1 f_1 v\,\mathrm{d}x\quad\text{ for } v \in H^1_0(0,1)
\]
with $f_0 = 36$ and $f_1(x) = 50 \sin(2 \pi x)^2$ for 
$x \in (0,1)$. We have $T = \operatorname{id}$
and set $w^\dagger(x) \coloneqq \cos(2\pi x)^2$. Apart from the
specific parameter values, this is the benchmark problem from
\cite{leyffer2025mccormick}. We thus choose $w_\ell = 0$ and
$w_u = 1$. We can choose $s = 1$ according to the results in 
\cref{thm:balancing}.

\paragraph{Experimental setup}
We solve the PDE on a uniform discretization
into $1024$ grid cells with a finite-element ansatz of
first-order continuous Lagrange elements
and prescribe the sequence of noise values
$\delta \in \{10^{-1}, 10^{-2}, 10^{-3}, 10^{-4}, 10^{-5} \}$.
We generate $\ydel$ by perturbing the nodal values $y^\dagger$
randomly using a normal distribution with mean zero and
standard deviation $1.1\,\delta$, implying the actually
observed measurement errors tabulated in
\cref{tbl:noise_implications}.
We compute the mesh sizes $h$ and $\tau$ using the balancing
result \cref{thm:balancing} as
\begin{gather}\label{eq:balancing_implemented}
h = \lceil\delta^{-\max\{2/(1+4s),2/(3+4s)\}}\rceil^{-1}
\enskip\text{and}\enskip
\tau = \lceil\min\{h,\delta^{1/(4s^2)}\}^{-1}\rceil^{-1},
\end{gather}
where the rounding up in between a double inversion is employed
to get an integer number of grid cells.
The resulting values and the corresponding numbers of grid cells
$N_h$ and $N_\tau$ are tabulated in \cref{tbl:noise_implications}
too.
\begin{table}[h]
\begin{center}
\caption{Derived values of $h$ and $\tau$,
corresponding grid cell numbers $N_h$ and $N_\tau$,
and observed measurement errors for the prescribed noise levels $\delta$.}\label{tbl:noise_implications}
\begin{tabular}{cccccc}
	\toprule
	$\delta$ 
	& $h$ 
	& $N_h$
	& $\tau$
	& $N_\tau$
	& $\|y^\dagger - \ydel\|_{L^2}$ \\
	\midrule
	$10^{-1}$ 
	& $3.981 \times 10^{-1}$
	& $3$
	& $3.981 \times 10^{-1}$
	& $3$
	& $8.566 \times 10^{-2}$ \\
	$10^{-2}$ 
	& $1.585 \times 10^{-1}$
	& $7$
	& $1.585 \times 10^{-1}$
	& $7$
	& $9.137 \times 10^{-3}$ \\
	$10^{-3}$ 
	& $6.310 \times 10^{-2}$
	& $16$
	& $6.310 \times 10^{-2}$
	& $16$	
	& $8.699 \times 10^{-4}$ \\
	$10^{-4}$ 
	& $2.512 \times 10^{-2}$ 
	& $40$
	& $2.512 \times 10^{-2}$ 
	& $40$
	& $8.880 \times 10^{-5}$ \\
	$10^{-5}$ 
	& $1.000 \times 10^{-2}$ 
	& $101$
	& $1.000 \times 10^{-2}$ 
	& $101$	
	& $9.245 \times 10^{-6}$ \\
	\bottomrule
\end{tabular}
\end{center}
\end{table}
We run the L-BFGS-B algorithm \cite{liu1989limited} on 
\eqref{eq:inv_delta}, yielding an approximately stationary point.
We initialize L-BFGS-B using a computed solution to the McCormick
relaxation \eqref{eq:mcc_delta_tau}, where the used values for the
bounds on the locally averaged $u_\ell^i$ and $u_u^i$
for all $i$ in \eqref{eq:mcc_delta_tau} are generated by applying
the optimization-based bound
tightening (OBBT) procedure described in \cite{leyffer2025mccormick}
to the initial conservative bounds $u_\ell^i = -10^{3}$ and
$u_u^i = 10^3$ for all $i$. For all prescribed noise levels $\delta$,
this procedure implies a suboptimality $\varepsilon$ by
comparison to the global lower bound of zero that is already below
the threshold choice $\delta$ for the suboptimality;
see \cref{thm:balancing}. This allows us to be in the regime
of the balancing result \cref{thm:balancing} without the need
to execute an expensive spatial branch-and-bound procedure for
global optimization. The observed values are tabulated in 
\cref{tbl:opt_results} as $\varepsilon_{\textrm{O}}$.

For comparison and in particular to highlight the 
benefit of \emph{tight} lower bound computations, we 
also run L-BFGS-B with two further initializations. 
First, we initialize with a solution to the McCormick 
relaxation \eqref{eq:inv_delta_tau}
without OBBT but the conservative bounds $u_\ell^i = -10^{3}$ and
$u_u^i = 10^3$ for all $i$. Second, we initialize with
the constant function $w(x) = 0.5$ for all $x \in (0,1)$.
The observed values are tabulated as 
$\varepsilon_{\textrm{P}}$ and $\varepsilon_{\textrm{N}}$ in 
\cref{tbl:opt_results} too.

\paragraph{Implementation and compute environment}
The experiment code was written in Python.
For the implementation of L-BFGS-B, we have used the one
in SciPy \cite{2020SciPy-NMeth}. The other optimizations
were performed by using the primal simplex method implemented
in Gurobi \cite{gurobi}. All computations were executed on a laptop computer
with a 2.50\,GHz Intel(R) Core(TM) i7-11850H CPU
and 64 GB RAM.

\paragraph{Results}
Since this is used several times in the description of the results, we note that
we generally refer to the three different initializations of L-BFGS-B as
\begin{itemize}
	\item[$\mathrm{O}$] Solution of the McCormick relaxation with OBBT.
	\item[$\mathrm{P}$] Solution of the McCormick relaxation without OBBT.
	\item[$\mathrm{N}$] Constant function with value $0.5$.
\end{itemize}

Regarding the observed values for $\varepsilon$, the initialization
$\mathrm{O}$ almost always performs
best but the desired  suboptimality threshold of $\delta$ due to \cref{thm:balancing} is
satisfied for all three initializations and all values of $\delta$.

Next, we compare the main quantity of interest, the resulting
reconstruction errors $\|w^\dagger - \hat{w}^{\textrm{IV}}_{\delta\tau}\|_{L^2(0,1)}$
for $\textrm{IV} \in \{\textrm{O}, \textrm{P}, \textrm{N}\}$.
Here, $\hat{w}^{\textrm{IV}}_{\delta\tau}$ is the 
the best respective computed solution candidate for \eqref{eq:inv}
computed with L-BFGS-B with initialization indicated by
$\textrm{IV}$. Here, the initialization $\mathrm{O}$ performs best or gives a
value close to the best value. In addition and in contrast to the other two initializations,
a clear downward trend can be observed here as $\delta$ tends to zero. Specifically, the reconstruction error approximately
halves when the noise level is divided by $10$. Again, the obtained values are tabulated
in \cref{tbl:opt_results}. For the smallest noise level, the reconstruction error obtained
with $\mathrm{O}$ is almost a magnitude smaller than the reconstruction errors obtained
with $\mathrm{P}$ and $\mathrm{N}$.
The observed rate of the reconstruction error for the case $\mathrm{O}$ is 
slightly higher (better) than the predicted rate from \cref{thm:balancing}.
To provide a visual impression of this effect,
the reconstruction error trends of $\|w^\dagger - \hat{w}^{\textrm{IV}}_{\delta\tau}\|_{L^2(0,1)}$
are plotted in comparison to $\delta^{1/(1+4s)} = \delta^{1/5}$,
the predicted rate of \cref{thm:balancing}, in
\cref{fig:error_trend_vs_delta}. A qualitative impression can be obtained
from the resulting solution candidates of L-BFGS-B, \revision{which are plotted
in \cref{fig:reconstructions_1e-3} for $\delta = 10^{-3}$ and \cref{fig:reconstructions_1e-5}
for $\delta = 10^{-5}$.} The function $\hat{w}^{\textrm{O}}_{\delta\tau}$
closely follows the globally optimal solution $w^\dagger$ while
$\hat{w}^{\textrm{P}}_{\delta\tau}$ is substantially off at several intervals
and $\hat{w}^{\textrm{N}}_{\delta\tau}$ is clearly an (approximately)
stationary point that has a substantially different shape than $w^\dagger$.\Margin{R1 4.}

\revision{Regarding the robustness of the selection of $h$ and $\tau$, we fix
them according to \eqref{eq:balancing_implemented} for a reference noise
level $\delta_\ast = 10^{-3}$ and then execute L-BFGS-B for the initializations 
$\textrm{IV} \in \{\textrm{O}, \textrm{P}, \textrm{N}\}$
for the sequence of noise values 
$\delta \in \{10^{-1},10^{-2},10^{-3},10^{-4},10^{-5}\}$. We observe that both
the suboptimality and the reconstruction error have a generally decreasing trend 
until the noise level $\delta = 10^{-3}$, for which $\tau$ and $h$ are computed using
our balancing result. Then, the values stagnate or increase slightly
with the exception of $\varepsilon_{\textrm{N}}$, which falls to the smallest
overall value for the suboptimality for $\delta = 10^{-4}$ and then increases 
back to a comparatively large value for $\delta = 10^{-5}$. Since the reconstruction
error is not close to the global optimum for this value, this outlier 
is a local minimum with a very low objective value / very good suboptimality
as can happen in non-convex minimization.
We have tabulated the recorded values in \cref{tbl:stability_results}.}\Margin{R1 2.}

We also briefly compare the run times of the different steps of the
optimization for the initial $\mathrm{O}$, $\mathrm{P}$, and
$\mathrm{N}$. The largest run time effect by far has the OBBT procedure
to tighten \eqref{eq:mcc_delta_tau} which only occurs in the case $\mathrm{O}$
and increases the run time of $\mathrm{O}$ over the one $\mathrm{P}$ by an order
of magnitude. The solution of the (convex) McCormick relaxation \eqref{eq:mcc_delta_tau}
in contrast is almost negligible and always about an order of magnitude smaller
than the L-BFGS-B execution so that the run time of $\mathrm{P}$ is comparable
with the one of $\mathrm{N}$. A breakdown of the run times can be looked up in
\cref{tbl:runtimes}.

\begin{table}[h]
	\begin{center}
		\caption{Observed suboptimality $\varepsilon_{\textrm{IV}}$
		and reconstruction error
		$\|w^\dagger - \hat{w}^{\textrm{IV}}_{\delta\tau}\|_{L^2}$
		for $\textrm{IV} \in \{\textrm{O},\textrm{P},\textrm{N} \}$
		for the prescribed noise levels $\delta$.
		The lowest values (up to our rounding precision of four
		digits) are bold faced.}\label{tbl:opt_results}
		\begin{adjustbox}{width=\textwidth}	
		\begin{tabular}{ccccccc}
			\toprule
			$\delta$ 
			& $\varepsilon_{\textrm{O}}$ 
			& $\varepsilon_{\textrm{P}}$ 
			& $\varepsilon_{\textrm{N}}$
			& $\|w^\dagger - \hat{w}^{\textrm{O}}_{\delta\tau}\|_{L^2}$
			& $\|w^\dagger - \hat{w}^{\textrm{P}}_{\delta\tau}\|_{L^2}$
			& $\|w^\dagger - \hat{w}^{\textrm{N}}_{\delta\tau}\|_{L^2}$
			\\
			\midrule
			$10^{-1}$ 
			& $\mathbf{4.125 \times 10^{-3}}$
			& $\mathbf{4.125 \times 10^{-3}}$
			& $\mathbf{4.125 \times 10^{-3}}$
			& $\mathbf{3.794 \times 10^{-1}}$
			& $\mathbf{3.794 \times 10^{-1}}$
			& $\mathbf{3.794 \times 10^{-1}}$
			\\
			$10^{-2}$ 
			& $\mathbf{6.947 \times 10^{-5}}$
			& $6.955 \times 10^{-5}$
			& $6.952 \times 10^{-5}$
			& $1.845 \times 10^{-1}$
			& $1.845 \times 10^{-1}$
			& $\mathbf{1.844 \times 10^{-1}}$
			\\
			$10^{-3}$ 
			& $\mathbf{4.896 \times 10^{-7}}$
			& $1.002 \times 10^{-6}$
			& $8.996 \times 10^{-7}$
			& $\mathbf{8.536 \times 10^{-2}}$
			& $2.345 \times 10^{-1}$
			& $1.532 \times 10^{-1}$
			\\
			$10^{-4}$ 
			& $1.359 \times 10^{-7}$ 
			& $\mathbf{1.042 \times 10^{-7}}$
			& $4.731 \times 10^{-7}$
			& $3.777 \times 10^{-2}$
			& $\mathbf{3.532 \times 10^{-2}}$
			& $1.517 \times 10^{-1}$
			\\
			$10^{-5}$ 
			& $\mathbf{4.980 \times 10^{-8}}$ 
			& $4.998 \times 10^{-7}$
			& $4.799 \times 10^{-7}$
			& $\mathbf{2.128 \times 10^{-2}}$
			& $1.552 \times 10^{-1}$
			& $1.483 \times 10^{-1}$
			\\
			\bottomrule
		\end{tabular}
		\end{adjustbox}
	\end{center}
\end{table}

\begin{table}[h]
	\begin{center}
		\caption{Observed suboptimality $\varepsilon_{\textrm{IV}}$
			and reconstruction error
			$\|w^\dagger - \hat{w}^{\textrm{IV}}_{\delta\tau}\|_{L^2}$
			for $\textrm{IV} \in \{\textrm{O},\textrm{P},\textrm{N} \}$
			for the prescribed noise levels $\delta$
			but fixed values for $h(\delta_\ast)$, $\tau(\delta_\ast)$ computed using
			\eqref{eq:balancing_implemented} for $\delta_\ast = 10^{-3}$.}\label{tbl:stability_results}
		\begin{adjustbox}{width=\textwidth}	
			\begin{tabular}{ccccccc}
				\toprule
				$\delta$ 
				& $\varepsilon_{\textrm{O}}$ 
				& $\varepsilon_{\textrm{P}}$ 
				& $\varepsilon_{\textrm{N}}$
				& $\|w^\dagger - \hat{w}^{\textrm{O}}_{\delta\tau}\|_{L^2}$
				& $\|w^\dagger - \hat{w}^{\textrm{P}}_{\delta\tau}\|_{L^2}$
				& $\|w^\dagger - \hat{w}^{\textrm{N}}_{\delta\tau}\|_{L^2}$
				\\
				\midrule
				$10^{-1}$ 
				& $3.625 \times 10^{-3}$
				& $3.628 \times 10^{-3}$
				& $3.635 \times 10^{-3}$
				& $1.723 \times 10^{-1}$
				& $2.704 \times 10^{-1}$
				& $2.026 \times 10^{-1}$
				\\
				$10^{-2}$ 
				& $4.097 \times 10^{-5}$
				& $4.134 \times 10^{-5}$
				& $4.128 \times 10^{-5}$
				& $8.710 \times 10^{-2}$
				& $2.387 \times 10^{-1}$
				& $1.522 \times 10^{-1}$
				\\
				$10^{-3}$ 
				& $4.896 \times 10^{-7}$
				& $1.002 \times 10^{-6}$
				& $8.996 \times 10^{-7}$
				& $8.536 \times 10^{-2}$
				& $2.345 \times 10^{-1}$
				& $1.532 \times 10^{-1}$
				\\
				$10^{-4}$ 
				& $6.138 \times 10^{-7}$
				& $1.044 \times 10^{-6}$
				& $2.515 \times 10^{-7}$
				& $8.808 \times 10^{-2}$
				& $2.411 \times 10^{-1}$
				& $1.510 \times 10^{-1}$
				\\
				$10^{-5}$ 
				& $6.205 \times 10^{-7}$
				& $2.203 \times 10^{-6}$
				& $1.758 \times 10^{-6}$
				& $8.812 \times 10^{-2}$
				& $2.505 \times 10^{-1}$
				& $1.695 \times 10^{-1}$
				\\
				\bottomrule
			\end{tabular}
		\end{adjustbox}
	\end{center}
\end{table}

\begin{figure}[ht]
\begin{center}
\pgfplotsset{width=.75\textwidth,height=5.25cm} 
\begin{tikzpicture}
\begin{axis}[
xmode=log,
ymode=log,
ytick={0.01,0.1,1},
xlabel=$\delta$,
ylabel=$\|w^\dagger - \hat{w}_{\delta\tau}^{\mathrm{IV}}\|_{L^2(\Omega)}$,
legend pos=south east,
]
\addplot[black,thick,mark=*] table {
	1e-1 3.794e-1
	1e-2 1.845e-1
	1e-3 8.536e-2
	1e-4 3.777e-2
	1e-5 2.128e-2
};
\addplot[black,thick,dashed,mark=diamond] table {
	1e-1 3.794e-1
	1e-2 1.845e-1
	1e-3 2.345e-1
	1e-4 3.532e-2
	1e-5 1.552e-1
};
\addplot[black,thick,dash dot,mark=star] table {
	1e-1 3.794e-1
	1e-2 1.844e-1
	1e-3 1.532e-1
	1e-4 1.517e-1
	1e-5 1.483e-1
};
\addplot[black,thick,densely dotted,mark=+] table {
	1e-1 0.6309573444801932
	1e-2 0.3981071705534972
	1e-3 0.251188643150958
	1e-4 0.15848932
	1e-5 0.1
};
\legend{$\mathrm{IV} = O$,
	$\mathrm{IV} = P$,
	$\mathrm{IV} = N$,
	$\delta^{1/(1 + 4s)}$}
\end{axis}
\end{tikzpicture}	
\end{center}
\caption{Reconstruction errors
$\|w^\dagger - \hat{w}_{\delta\tau}^{\mathrm{IV}}\|_{L^2(\Omega)}$ as $\delta \to 0$ for $\mathrm{IV} \in \{\mathrm{O},\mathrm{P},\mathrm{N}\}$.
}\label{fig:error_trend_vs_delta}
\end{figure}

\begin{figure}[ht]
\begin{center}
\includegraphics[width=\textwidth]{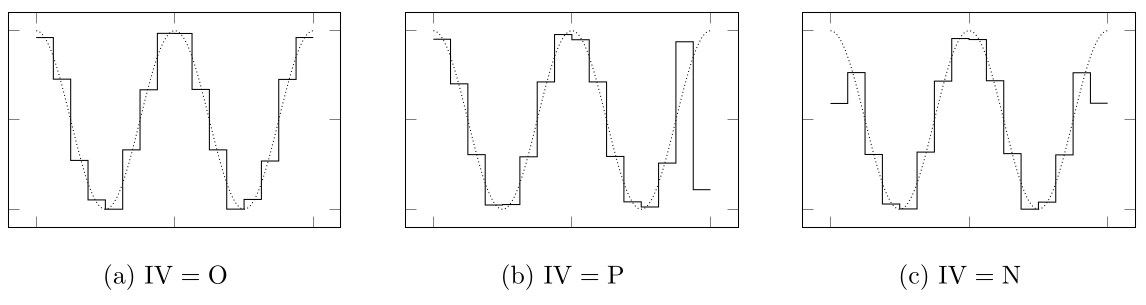}
\caption{$w^\dagger$ (dotted) vs.\ $\hat{w}_{\delta\tau}^{\mathrm{IV}}$ (solid)
	for $\delta =10^{-3}$ over $\Omega = (0,1)$.}\label{fig:reconstructions_1e-3}
\end{center}
\end{figure}

\begin{figure}[ht]
\begin{center}
\includegraphics[width=\textwidth]{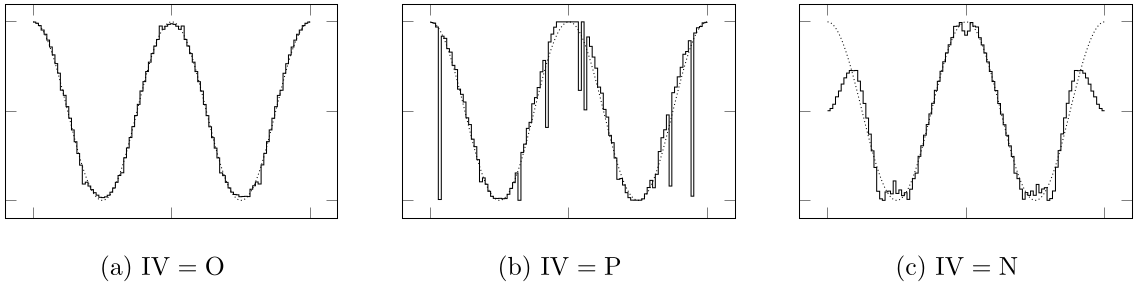}
\caption{$w^\dagger$ (dotted) vs.\ $\hat{w}_{\delta\tau}^{\mathrm{IV}}$ (solid)
	for $\delta =10^{-5}$ over $\Omega = (0,1)$.}\label{fig:reconstructions_1e-5}
\end{center}
\end{figure}

\begin{table}[h]
	\begin{center}
		\caption{Run times of OBBT (only $\mathrm{O}$), solution of the McCormick
		relaxation \eqref{eq:mcc_delta_tau} (only $\mathrm{O}$ and $\mathrm{P}$), and L-BFGS-B,
		and accumulated for $\mathrm{IV} \in \{\mathrm{O},\mathrm{P},\mathrm{N}\}$
		in seconds.}\label{tbl:runtimes}
		\begin{adjustbox}{width=\textwidth}	
			\begin{tabular}{ccccccccc}
				\toprule
				\multicolumn{1}{c|}{}
				& \multicolumn{4}{c|}{$\mathrm{O}$}
				& \multicolumn{3}{c|}{$\mathrm{P}$}
				& $\mathrm{N}$
				\\
				\multicolumn{1}{c|}{$\delta$}
				& OBBT
				& \eqref{eq:mcc_delta_tau} 
				& L-BFGS-B
				& \multicolumn{1}{c|}{$\sum$}
				& \eqref{eq:mcc_delta_tau} 
				& L-BFGS-B
				& \multicolumn{1}{c|}{$\sum$}
				& L-BFGS-B / $\sum$\\
				\midrule
				$10^{-1}$ 
				& $4.367 \times 10^{-1}$
				& $6.983 \times 10^{-2}$
				& $2.078$
				& $2.586$
				& $1.164 \times 10^{-1}$
				& $1.997$
				& $2.113$
				& $1.632$
				\\
				$10^{-2}$ 
				& $1.044$
				& $7.458 \times 10^{-2}$
				& $4.810$
				& $5.928$
				& $6.881 \times 10^{-2}$
				& $4.318$
				& $4.387$
				& $4.097$
				\\
				$10^{-3}$ 
				& $2.831$
				& $7.244 \times 10^{-2}$
				& $5.059$
				& $7.962$
				& $8.312 \times 10^{-2}$
				& $7.807$
				& $7.891$
				& $6.382$
				\\
				$10^{-4}$ 
				& $1.523 \times 10^{1}$
				& $1.130 \times 10^{-1}$
				& $3.377$
				& $1.872 \times 10^{1}$
				& $1.079 \times 10^{-1}$
				& $3.141$
				& $3.249$
				& $7.040$
				\\
				$10^{-5}$ 
				& $5.415 \times 10^1$
				& $1.571 \times 10^{-1}$
				& $5.359$
				& $5.967 \times 10^1$
				& $1.782 \times 10^{-1}$
				& $5.219$
				& $5.398$				
				& $7.270$
				\\
				\bottomrule
			\end{tabular}
		\end{adjustbox}
	\end{center}
\end{table}
\Margin{R2 17.}

\paragraph{Interpretation} Including tight lower bound information (case $\mathrm{O}$)
into the initialization of a local optimization procedure for \eqref{eq:mcc_delta_tau}
has improved the resulting reconstruction error substantially and we observe a behavior
close to the prediction of \cref{thm:balancing} when driving $\delta \to 0$ even though the assumptions of boundedness away from zero at
the boundary are not satisfied here.
In contrast to this, we do not observe a decreasing trend as $\delta \to 0$
when very weak (case $\mathrm{P}$) or no lower bound information (case $\mathrm{N}$)
is included into the solution process. We stress again that the observed $\varepsilon$ is
smaller than $\delta$ so that this assumption of \cref{thm:balancing} also holds for
$\mathrm{P}$ and $\mathrm{N}$ but we clearly do not know the constants for the big-O-terms
and thus believe that our results are plausible and we do not suspect any bugs.

\revision{Regarding our experiment for different levels of $\delta$ with
fixed and thus unbalanced values of $h$ and $\tau$, we observe a decreasing trend
of the suboptimality and reconstruction error until the $\delta$ is in balance
with $h$ and $\tau$, after which a sideways trend follows. This is a plausible
observation and supports the intuition that one a smaller choice for $h$ and $\tau$ 
is a sensible hedge if the noise level is not known very well.}\Margin{R1 2.}

The computation of the tight lower bound information is the performance bottleneck in
our experiment. This observation is in line with the common knowledge in global optimization
that the computation of good primal bounds (that is upper bounds in minimization) is
often possible in a relatively inexpensive manner while the computation of tight dual bounds (that
is lower bounds in minimization), which is also generally key to certify (approximate)
global optimality, may generally be expensive when the underlying problem is nonconvex.

In our experiment, we were able to access the dual information indirectly through the
solution of \eqref{eq:mcc_delta_tau} and were thus also able to bypass a full spatial
branch-and-bound algorithm. Such a full spatial branch-and-bound procedure is clearly
desirable but comes with a wealth of further degrees of freedom and options 
that can and need to be adjusted carefully but are complementary to the subject of
this article so that we leave this task for future investigations.

\paragraph{Outlook to higher dimensions}

\revision{We note that in particular the OBBT computations are much more 
expensive for higher dimensions since the number of linear programs that
have to be solved for each round of bound tightenings is proportional to
the number of grid cells in the local averaging; see also 
\cite{leyffer2025mccormick}. In order to handle the computational burden
in this case, one may, for example, use warm start techniques for the linear 
programs, parallelize their execution, and---in order to avoid memory exhaustion---replace the full set of McCormick inequalities by a cutting plane strategy.}
\Margin{R1 1.}

\section*{Acknowledgments}
Paul Manns acknowledges funding by Deutsche Forschungsgemeinschaft (DFG) under project no.\ 540198933.
Barbara Kaltenbacher acknowledges funding by the Austrian Science Fund (FWF) 
[10.55776/F100800]. 

\revisionown{The authors thank the reviewers for providing valuable comments and suggestions that have led to an improved version of the paper.}

\appendix 

\section{Regularization by Discretization -- Convergence of the Least Squares Method for Nonlinear Inverse Problems}
We consider an abstract inverse problem 
\begin{equation}\label{eq:Fxiy}
\mathbb{F}(\xi)=y, \quad \xi\in \mathcal{C}\subseteq \mathbb{X}
\end{equation}
with a forward operator $\mathbb{F}:\mathbb{X}\to \mathbb{Y}$, $\mathbb{X},\,\mathbb{Y}$ Banach spaces, and noisy data $y^\delta$ satisfying \eqref{eq:delta}
and the notation $\xi^\dagger\in \mathcal{C}$ for the exact solution to \eqref{eq:Fxiy}. Here $\mathcal{C}$ is a
closed convex subset of $\mathbb{X}$. Given a finite-dimensional subspace $\mathbb{X}_h$ of $\mathbb{X}$, we define a (discretization-)regularized
solution $\xi_h^\delta$ to \eqref{eq:Fxiy} by 
\begin{gather}\label{eq:leastsquares}
\xi_h^\delta \in \argmin
\left\{ \|\mathbb{F}(\xi)-y^\delta\|_{\mathbb{Y}} \colon
\xi\in \mathbb{X}_h\cap\mathcal{C}\right\}.
\end{gather} 
Existence of a minimizer follows by (weak/*) compactness of $\mathbb{X}_h\cap\mathcal{C}$ and (weak/*) lower semicontinuity of
the objective function under mild conditions on $\mathbb{F}$ and the spaces.

We derive a general convergence result by extending \cite[Theorem 3.7]{projBanach} straightforwardly by means of a
stability constant $\kappa_h$ that satisfies
\begin{equation}\label{eq:kappa}
\kappa_h \ge \sup_{\tilde{\xi}\not=\xi\in \mathbb{X}_h\cap\mathcal{C}}\frac{\|\tilde{\xi}-\xi\|_{\mathbb{X}}}{\|\mathbb{F}(\tilde{\xi})-\mathbb{F}(\xi)\|_{\mathbb{Y}}}.
\end{equation}
\begin{proposition}\label{prp:regdis}
	For $\xi_h^\delta$ defined by \eqref{eq:leastsquares}, the error estimate 
	\begin{gather}\label{eq:regdis_error_estimate}
	\|\xi_h^\delta-\xi^\dagger\|_{\mathbb{X}} 
	\leq \inf_{\xi_h\in \mathbb{X}_h}\Bigl(\|\xi_h-\xi^\dagger\|_{\mathbb{X}} 
\revisionown{+2 \kappa_h \|\mathbb{F}(\xi_h)-F(\xi^\dagger)\|_{\mathbb{Y}}}
\Bigr)
	+2 \kappa_h\,\delta
	\end{gather}
	holds with $\kappa_h$ defined by \eqref{eq:kappa}. 
	In particular, if the metric projection $P_{\mathbb{X}_h\cap\mathcal{C}}\xi^\dagger$ exists 
	for all $h\in(0,\bar{h}]$ for some $\bar{h}> 0$
	\footnote{where in place of the metric projection, we might use an arbitrary mapping $P_{\mathbb{X}_h\cap\mathcal{C}}:\mathbb{X}\to \mathbb{X}_h\cap\mathcal{C}$, which is anyway only evaluated at $\xi^\dagger$}	
	and tends to $\xi^\dagger$ fast enough so that  
	\begin{equation}\label{eq:conv-cond}
	\kappa_h \|\mathbb{F}(P_{\mathbb{X}_h\cap\mathcal{C}}\xi^\dagger)-\mathbb{F}(\xi^\dagger)\|_{\mathbb{Y}}\to0\text{ as }h\to0,
	\end{equation}
	then a regularization parameter choice such that 
	\[
	h(\delta)\to0 \text{ and } \kappa_{h(\delta)}\,\delta\to0 \text{ as }\delta\to0
	\]
	yields convergence $\|\xi_{h(\delta)}^\delta-\xi^\dagger\|_{\mathbb{X}}\to0$ as $\delta\to0$.
\end{proposition}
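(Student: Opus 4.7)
The plan is to derive the error estimate \eqref{eq:regdis_error_estimate} by the standard two-step approach of inserting an intermediate discretized element $\xi_h$, then invoking the stability constant $\kappa_h$ on the discrete space and the optimality of $\xi_h^\delta$. Specifically, I first pick an arbitrary $\xi_h \in \mathbb{X}_h \cap \mathcal{C}$ and split via the triangle inequality
\[
\|\xi_h^\delta-\xi^\dagger\|_{\mathbb{X}} \le \|\xi_h^\delta-\xi_h\|_{\mathbb{X}} + \|\xi_h-\xi^\dagger\|_{\mathbb{X}}.
\]
Since both $\xi_h^\delta$ and $\xi_h$ lie in $\mathbb{X}_h\cap\mathcal{C}$, the definition of $\kappa_h$ in \eqref{eq:kappa} yields
\[
\|\xi_h^\delta-\xi_h\|_{\mathbb{X}} \le \kappa_h\,\|\mathbb{F}(\xi_h^\delta)-\mathbb{F}(\xi_h)\|_{\mathbb{Y}}.
\]

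Next, I bound the image-space residual by inserting $y^\delta$ in the triangle inequality and exploiting the minimization property
\[
\|\mathbb{F}(\xi_h^\delta)-y^\delta\|_{\mathbb{Y}} \le \|\mathbb{F}(\xi_h)-y^\delta\|_{\mathbb{Y}},
\]
which holds because $\xi_h^\delta$ is a minimizer of \eqref{eq:leastsquares} over $\mathbb{X}_h\cap\mathcal{C}$. Together with $\mathbb{F}(\xi^\dagger)=y$ and $\|y-y^\delta\|_{\mathbb{Y}}\le\delta$ from \eqref{eq:delta}, this gives
\[
\|\mathbb{F}(\xi_h^\delta)-\mathbb{F}(\xi_h)\|_{\mathbb{Y}} \le 2\|\mathbb{F}(\xi_h)-y^\delta\|_{\mathbb{Y}} \le 2\|\mathbb{F}(\xi_h)-\mathbb{F}(\xi^\dagger)\|_{\mathbb{Y}} + 2\delta.
\]
Combining these inequalities and taking the infimum over admissible $\xi_h$ produces the claimed estimate \eqref{eq:regdis_error_estimate}.

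For the convergence assertion, I instantiate the infimum at the choice $\xi_h = P_{\mathbb{X}_h\cap\mathcal{C}}\xi^\dagger$, which is feasible for all sufficiently small $h$ by the existence hypothesis. Best approximation in a closed convex set together with $\xi^\dagger\in\mathcal{C}$ gives $\|P_{\mathbb{X}_h\cap\mathcal{C}}\xi^\dagger-\xi^\dagger\|_{\mathbb{X}}\to 0$ as $h\to 0$ (exploiting that the union of the $\mathbb{X}_h$ is dense, which is implicit in the discretization-regularization framework). The term $\kappa_h\|\mathbb{F}(P_{\mathbb{X}_h\cap\mathcal{C}}\xi^\dagger)-\mathbb{F}(\xi^\dagger)\|_{\mathbb{Y}}$ vanishes by assumption \eqref{eq:conv-cond}, and $2\kappa_{h(\delta)}\delta\to 0$ by the assumed parameter choice rule. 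All three contributions to the right-hand side of \eqref{eq:regdis_error_estimate} therefore tend to zero, yielding $\|\xi_{h(\delta)}^\delta-\xi^\dagger\|_{\mathbb{X}}\to 0$.

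The proof is essentially routine once \eqref{eq:kappa} and the minimization characterization of $\xi_h^\delta$ are put to work; the only subtlety is ensuring that $\xi_h$ in the infimum is taken compatible with $\mathbb{X}_h\cap\mathcal{C}$ so that $\kappa_h$ is applicable. This is automatic when the metric projection onto $\mathbb{X}_h\cap\mathcal{C}$ is employed, and the footnote in the statement covers the case where a different feasible approximant is used in place of the metric projection. There is no deep obstacle; the estimate is a direct extension of \cite[Theorem 3.7]{projBanach} by replacing the linear stability bound used there with the nonlinear quotient \eqref{eq:kappa}.
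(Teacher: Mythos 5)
Your proof is correct and follows essentially the same route as the paper's: triangle inequality with an intermediate $\xi_h\in\mathbb{X}_h\cap\mathcal{C}$, the stability constant \eqref{eq:kappa} to pass from image to pre-image space, the minimality of $\xi_h^\delta$ to get $\|\mathbb{F}(\xi_h^\delta)-\mathbb{F}(\xi_h)\|_{\mathbb{Y}}\le 2\|\mathbb{F}(\xi_h)-y^\delta\|_{\mathbb{Y}}$, and then the noise bound \eqref{eq:delta}; the convergence claim is likewise obtained by instantiating at $P_{\mathbb{X}_h\cap\mathcal{C}}\xi^\dagger$. Your closing remark that the infimum must effectively run over $\mathbb{X}_h\cap\mathcal{C}$ rather than all of $\mathbb{X}_h$ is an accurate reading of the argument.
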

\begin{proof}
Given any $\xi_h\in \mathbb{X}_h\cap\mathcal{C}$, minimality yields
\[ \|\mathbb{F}(\xi_h^\delta)-y^\delta\|_{\mathbb{Y}} \leq \|\mathbb{F}(\xi_h)-y^\delta\|_{\mathbb{Y}} \]
and therefore
\begin{equation}\label{eq:xihdelta_min}
\|\mathbb{F}(\xi_h^\delta)-\mathbb{F}(\xi_h)\|_{\mathbb{Y}} = \|\mathbb{F}(\xi_h^\delta)-y^\delta-(\mathbb{F}(\xi_h)-y^\delta)\|_{\mathbb{Y}}\leq 2\|\mathbb{F}(\xi_h)-y^\delta\|_{\mathbb{Y}}.
\end{equation}
Using $\mathbb{F}(\xi^\dagger)=y$, the stability constant $\kappa_h$ defined in \eqref{eq:kappa}, \eqref{eq:delta},
and \eqref{eq:xihdelta_min}, we obtain
\begin{equation}\label{eq:xihdelta_xidagger}
\begin{aligned}
\|\xi_h^\delta-\xi^\dagger\|_{\mathbb{X}} 
&\leq \|\xi_h-\xi^\dagger\|_{\mathbb{X}} +\|\xi_h^\delta-\xi_h\|_{\mathbb{X}} \\
&\leq \|\xi_h-\xi^\dagger\|_{\mathbb{X}} + \kappa_h \|\mathbb{F}(\xi_h^\delta) - \mathbb{F}(\xi_h)\|_{\mathbb{Y}}
\\
&\leq \|\xi_h-\xi^\dagger\|_{\mathbb{X}} +2 \kappa_h (\|\mathbb{F}(\xi_h)-y^\delta\|_{\mathbb{Y}}+\delta).
\end{aligned}
\end{equation}
Using minimality and the stability assumption \eqref{eq:conv-cond} yields the second claim.
\end{proof}
\begin{remark}
Note that the formulation of the first result in this proposition is similar in spirit to Strang's first lemma.
For the latter convergence result it suffices to consider a slightly more specifically and a less restrictive
defined stability constant
\begin{equation}\label{eq:kappa_xidagger}
\kappa_h \ge \sup_{\xi\in \mathbb{X}_h\cap\mathcal{C}\setminus\{P_{\mathbb{X}_h\cap\mathcal{C}}\xi^\dagger\}}\frac{\|\xi -P_{\mathbb{X}_h\cap\mathcal{C}}\xi^\dagger\|_{\mathbb{X}}}{\|\mathbb{F}(\xi)-\mathbb{F}(P_{\mathbb{X}_h\cap\mathcal{C}}\xi^\dagger)\|_{\mathbb{Y}}}
\end{equation}
in place of \eqref{eq:kappa} by means of the choice $\xi_h = P_{\mathbb{X}_h \cap \mathcal{C}} \xi^\dagger$ in the argument.
This allows to make use of additional regularity of $\xi^\dagger$ if need be and in particular complies with our
stability constant assumption \eqref{eq:kappa_for_inv} for our specific problem setting above.
\end{remark}

\Cref{prp:regdis} shows that---as is well known for linear inverse problems---regularization by discretization \textit{in pre-image space}
only works if the approximation error tends to zero fast enough to counterbalance the growth of the stability constant.
The result can be extended to a setting with perturbed evaluation $\mathbb{F}_\tau$ of the forward operator and approximate optimization;
cf.\ \cite{NeubauerScherzer:1990}; as follows. To this end, let $\hat{\xi}_{\delta\,h\,\tau}$ be an approximate minimizer of
the perturbed problem
\begin{gather}\label{eq:leastsquares_tau}
\begin{aligned}
\min_{\xi}\ &\|\mathbb{F}_\tau(\xi)-y^\delta\|_{\mathbb{Y}}\\
\st\ \ & \xi\in \mathbb{X}_h\cap\mathcal{C},
\end{aligned}
\end{gather} 
that is,
\begin{equation}\label{eq:xihat_delta_tau}
\hat{\xi}_{\delta\,h\,\tau} \in \mathbb{X}_h\cap\mathcal{C}
\quad\text{and}\quad
\|\mathbb{F}_\tau(\hat{\xi}_{\delta\,h\,\tau})-y^\delta\|_{\mathbb{Y}} \leq \inf_{\xi\in \mathbb{X}_h\cap\mathcal{C}}\|\mathbb{F}_\tau(\xi)-y^\delta\|_{\mathbb{Y}}+\varepsilon.
\end{equation}
In this situation, we define
\begin{equation}\label{eqn:eta_tau_h}
\eta_{h\,\tau} \coloneqq \sup_{\xi_h\in \mathbb{X}_h\cap\mathcal{C}} \|\mathbb{F}_\tau(\xi_h)-\mathbb{F}(\xi_h)\|_{\mathbb{Y}}
\end{equation}
and obtain an alternative estimate.
\begin{proposition}\label{prp:regdis_tau}
Let $\hat{\xi}_{\delta\,h\,\tau}$ satisfy \eqref{eq:xihat_delta_tau}. Then the error estimate
\[
\|\hat{\xi}_{\delta\,h\,\tau}-\xi^\dagger\|_{\mathbb{X}} 
\leq \|P_{\mathbb{X}_h\cap\mathcal{C}}\xi^\dagger-\xi^\dagger\|_{\mathbb{X}} 
+\kappa_{h} \bigl(2\|\mathbb{F}(P_{\mathbb{X}_h\cap\mathcal{C}}\xi^\dagger)-\mathbb{F}(\xi^\dagger)\|_{\mathbb{Y}}+2\delta+2\eta_{h\,\tau}+\varepsilon\bigr)
\] 
holds with $\kappa_h$ satisfying \eqref{eq:kappa_xidagger} or \eqref{eq:kappa} and $\eta_{h\,\tau}$ defined by \eqref{eqn:eta_tau_h}.  
In particular, if for a proper choice $\tau=\tau(h)$    
\begin{equation}\label{eq:conv-cond_tau}
\kappa_{h} \bigl(\|\mathbb{F}(P_{\mathbb{X}_h\cap\mathcal{C}}\xi^\dagger)-\mathbb{F}(\xi^\dagger)\|_{\mathbb{Y}}+2\eta_{\tau(h)\,h}\bigr)\to0\text{ as }h\to0,
\end{equation}
then a precision and regularization parameter choice such that 
\[
\varepsilon=O(\delta), \quad 
h(\delta)\to0, \quad 
\revisionown{
\kappa_{h(\delta)}\,\delta\to0 \quad 
\kappa_{h(\delta)}\, \eta_{\tau(h(\delta))\,h(\delta)}\to0 \quad 
}
\text{ as }\delta\to0
\]
yields convergence $\|\hat{\xi}_{\delta\,h\,\tau}-\xi^\dagger\|_{\mathbb{X}}\to0$ as $\delta\to0$.
\end{proposition}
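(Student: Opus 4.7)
The plan is to mirror the argument of \cref{prp:regdis}, but with the perturbed operator $\mathbb{F}_\tau$ only appearing through the defining near-optimality condition \eqref{eq:xihat_delta_tau} for $\hat{\xi}_{\delta\,h\,\tau}$, while the stability estimate is still applied to the true operator $\mathbb{F}$. First I would split the reconstruction error via the triangle inequality as
\[
\|\hat{\xi}_{\delta\,h\,\tau}-\xi^\dagger\|_{\mathbb{X}}
\leq \|P_{\mathbb{X}_h\cap\mathcal{C}}\xi^\dagger-\xi^\dagger\|_{\mathbb{X}}
+\|\hat{\xi}_{\delta\,h\,\tau}-P_{\mathbb{X}_h\cap\mathcal{C}}\xi^\dagger\|_{\mathbb{X}}.
\]
The first summand directly becomes the approximation-error term in the claim.

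For the second summand, since both $\hat{\xi}_{\delta\,h\,\tau}$ and $P_{\mathbb{X}_h\cap\mathcal{C}}\xi^\dagger$ lie in $\mathbb{X}_h\cap\mathcal{C}$, the stability constant \eqref{eq:kappa_xidagger} (or equivalently \eqref{eq:kappa}) yields
\[
\|\hat{\xi}_{\delta\,h\,\tau}-P_{\mathbb{X}_h\cap\mathcal{C}}\xi^\dagger\|_{\mathbb{X}}
\leq \kappa_h \|\mathbb{F}(\hat{\xi}_{\delta\,h\,\tau})-\mathbb{F}(P_{\mathbb{X}_h\cap\mathcal{C}}\xi^\dagger)\|_{\mathbb{Y}}.
\]
The core of the proof is then to bound the right-hand residual by $2\|\mathbb{F}(P_{\mathbb{X}_h\cap\mathcal{C}}\xi^\dagger)-\mathbb{F}(\xi^\dagger)\|_{\mathbb{Y}}+2\delta+2\eta_{h\tau}+\varepsilon$. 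To that end I would split
\[
\|\mathbb{F}(\hat{\xi}_{\delta\,h\,\tau})-\mathbb{F}(P_{\mathbb{X}_h\cap\mathcal{C}}\xi^\dagger)\|_{\mathbb{Y}}
\leq \|\mathbb{F}(\hat{\xi}_{\delta\,h\,\tau})-\mathbb{F}(\xi^\dagger)\|_{\mathbb{Y}}
+\|\mathbb{F}(P_{\mathbb{X}_h\cap\mathcal{C}}\xi^\dagger)-\mathbb{F}(\xi^\dagger)\|_{\mathbb{Y}},
\]
which already produces one of the two copies of the approximation-residual term. For the remaining piece I would chain: pass from $\mathbb{F}$ to $\mathbb{F}_\tau$ paying $\eta_{h\tau}$ per swap by \eqref{eqn:eta_tau_h}, invoke $\mathbb{F}(\xi^\dagger)=y$ together with \eqref{eq:delta} to exchange $\mathbb{F}(\xi^\dagger)$ for $y^\delta$ at the cost of $\delta$, and then exploit the $\varepsilon$-optimality \eqref{eq:xihat_delta_tau} of $\hat{\xi}_{\delta\,h\,\tau}$ against the admissible competitor $P_{\mathbb{X}_h\cap\mathcal{C}}\xi^\dagger$ to reduce $\|\mathbb{F}_\tau(\hat{\xi}_{\delta\,h\,\tau})-y^\delta\|_{\mathbb{Y}}$ to $\|\mathbb{F}_\tau(P_{\mathbb{X}_h\cap\mathcal{C}}\xi^\dagger)-y^\delta\|_{\mathbb{Y}}+\varepsilon$. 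A final triangle step undoing the $\mathbb{F}_\tau \leftrightarrow \mathbb{F}$ swap (another $\eta_{h\tau}$) and using \eqref{eq:delta} once more leaves exactly $\|\mathbb{F}(P_{\mathbb{X}_h\cap\mathcal{C}}\xi^\dagger)-\mathbb{F}(\xi^\dagger)\|_{\mathbb{Y}}+2\delta+2\eta_{h\tau}+\varepsilon$ as the bound for $\|\mathbb{F}(\hat{\xi}_{\delta\,h\,\tau})-\mathbb{F}(\xi^\dagger)\|_{\mathbb{Y}}$; combined with the previous split the claimed error estimate follows.

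For the convergence assertion, I would simply read off the estimate: the first summand $\|P_{\mathbb{X}_h\cap\mathcal{C}}\xi^\dagger-\xi^\dagger\|_{\mathbb{X}}$ vanishes by definition of the metric projection as $h\to 0$, the contributions $\kappa_h \|\mathbb{F}(P_{\mathbb{X}_h\cap\mathcal{C}}\xi^\dagger)-\mathbb{F}(\xi^\dagger)\|_{\mathbb{Y}}$ and $\kappa_h\eta_{\tau(h)\,h}$ vanish under \eqref{eq:conv-cond_tau}, and $\kappa_{h(\delta)}\delta$ and $\kappa_{h(\delta)}\varepsilon=O(\kappa_{h(\delta)}\delta)$ vanish by the parameter-choice hypotheses, yielding $\|\hat{\xi}_{\delta\,h\,\tau}-\xi^\dagger\|_{\mathbb{X}}\to 0$. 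The only nontrivial bookkeeping is getting the coefficients right in the chain above (keeping $2\eta_{h\tau}$ rather than $4\eta_{h\tau}$), which I expect to be the main place where care is needed; but beyond that the proof is a routine adaptation of the proof of \cref{prp:regdis}.
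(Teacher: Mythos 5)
Your proposal is correct and follows essentially the same route as the paper's proof: the same triangle-inequality split of the error, the same application of the stability constant $\kappa_h$ to the true operator $\mathbb{F}$, and the same chaining through $\mathbb{F}_\tau$, $y^\delta$, and the $\varepsilon$-minimality \eqref{eq:xihat_delta_tau} with competitor $P_{\mathbb{X}_h\cap\mathcal{C}}\xi^\dagger$, yielding identical constants (the paper merely pivots the intermediate triangle inequalities around $y^\delta$ rather than $\mathbb{F}(\xi^\dagger)$). The only slight imprecision is that $\|P_{\mathbb{X}_h\cap\mathcal{C}}\xi^\dagger-\xi^\dagger\|_{\mathbb{X}}\to 0$ is not a consequence of the definition of the metric projection alone but an approximation property of the spaces $\mathbb{X}_h$ assumed in the surrounding setup; this does not affect the validity of the argument.
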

\begin{proof}
	We conclude from \eqref{eq:xihat_delta_tau} that for $\xi_h:=P_{\mathbb{X}_h\cap\mathcal{C}}\xi^\dagger$ we can estimate
	\[
	\begin{aligned}
	\|\mathbb{F}(\hat{\xi}_{\delta\,h\,\tau})-y^\delta\|_{\mathbb{Y}} 
	&\leq \|\mathbb{F}_\tau(\hat{\xi}_{\delta\,h\,\tau})-y^\delta\|_{\mathbb{Y}}
	+\|\mathbb{F}_\tau(\hat{\xi}_{\delta\,h\,\tau}) -\mathbb{F}(\hat{\xi}_{\delta\,h\,\tau})\|_{\mathbb{Y}}\\
	&\leq \|\mathbb{F}_\tau(\xi_h)-y^\delta\|_{\mathbb{Y}}+\varepsilon
	+\|\mathbb{F}_\tau(\hat{\xi}_{\delta\,h\,\tau}) -\mathbb{F}(\hat{\xi}_{\delta\,h\,\tau})\|_{\mathbb{Y}}\\
	&\leq \|\mathbb{F}(\xi_h)-y^\delta\|_{\mathbb{Y}}+\varepsilon
	+\|\mathbb{F}_\tau(\hat{\xi}_{\delta\,h\,\tau}) -\mathbb{F}(\hat{\xi}_{\delta\,h\,\tau})\|_{\mathbb{Y}}
	+\|\mathbb{F}_\tau(\xi_h) -\mathbb{F}(\xi_h)\|_{\mathbb{Y}}\\
	&\leq \|\mathbb{F}(\xi_h)-y^\delta\|_{\mathbb{Y}}+\varepsilon
	+2 \eta_{h\,\tau}.
	\end{aligned}
	\]
	Hence, by  the definition of $\eta_{h\,\tau}$,
	\[
	\begin{aligned}
	\|\mathbb{F}(\hat{\xi}_{\delta\,h\,\tau})-\mathbb{F}(\xi_h)\|_{\mathbb{Y}} 
	&= \|\mathbb{F}(\hat{\xi}_{\delta\,h\,\tau})-y^\delta-(\mathbb{F}(\xi_h)-y^\delta)\|_{\mathbb{Y}}
	\leq 2\|\mathbb{F}(\xi_h)-y^\delta\|_{\mathbb{Y}}+\varepsilon+2\eta_{h\,\tau}\\
	&\leq 2\|\mathbb{F}(\xi_h)-\mathbb{F}(\xi^\dagger)\|_{\mathbb{Y}}+2\delta+2\eta_{h\,\tau}+\varepsilon.
	\end{aligned}
	\]
	Combining this with 
	\[
	\|\hat{\xi}_{\delta\,h\,\tau}-\xi^\dagger\|_{\mathbb{X}} 
	\leq \|\xi_h-\xi^\dagger\|_{\mathbb{X}} +\kappa_h \|\mathbb{F}(\hat{\xi}_{\delta\,h\,\tau})-\mathbb{F}(\xi_h)\|_{\mathbb{Y}}
	\]
	yields the assertion.
\end{proof}
\end{document}